\newcommand{\E}{\mathbb{E}}
\newcommand{\R}{\mathbb{R}}
\def\={{\;\mathop{=}\limits^{\text{(law)}}\;}}
\newtheorem{theorem}{Theorem}[section]
\newtheorem{prop}[theorem]{Proposition}
\newtheorem{definition}[theorem]{Definition}
\theoremstyle{definition}
\newtheorem{remark}[theorem]{Remark}
\newtheorem{example}[theorem]{Example}
\numberwithin{equation}{section}
\title{Weak decreasing stochastic order}
\author[]{Antoine-Marie Bogso\thanks{ambogso@uy1.uninet.cm, ambogso@gmail.com; Phone: (+237)652620452}}
\author[]{Patrice Takam Soh\thanks{ptakam@yahoo.fr; Phone: (+237)699299181}}
\affil[]{University of Yaoundé I, Department of Mathematics, P.O. Box 812 Yaoundé, Cameroon}
\date{}
\begin{document}
 
\maketitle
 
\begin{abstract}\noindent
We introduce the notion of weak decreasing stochastic (WDS) ordering for real-valued processes with negative means, which, to our knowledge, has not been studied before. Thanks to Madan-Yor's argument, it follows that the WDS ordering is a necessary and sufficient condition for a process with negative mean to be embeddable in a standard Brownian motion by the Cox and Hobson extension of the Azéma-Yor algorithm. Since the decreasing stochastic order is stronger than the WDS order, then, for every stochastically non-decreasing family of probability measures with densities, the Cox-Hobson   stopping times provide an associated Markov process. The quantile process associated to a stochastically non-decreasing process is not necessarily Markovian.\\ 
\text{}
\\
{\bf Keywords: } WDS order, Kellerer's theorem, Cox-Hobson algorithm, total positivity.\\  
\text{}\\
\textbf{subclass MSC:} 60E15, 60G44, 60J25.
\end{abstract}

\section{Introduction}
\label{intro}
 
We consider a new stochastic ordering for probability measures with negative means, namely the weak decreasing stochastic (WDS) ordering which is related to the usual stochastic and the increasing convex orders. We recall that a family of probability measures $\mu=(\mu_t,t\in\R_+)$ is said to be non-decreasing in {\it the usual stochastic order} if, for every $0\leq s\leq t$ and every non-decreasing function $\phi$ such that $\int_{\R}\phi(y)\mu_s(dy)$ and $\int_{\R}\phi(y)\mu_t(dy)$ exist,
\begin{equation}\label{eq:DcrStochOrder}
\int_{\R}\phi(y)\mu_s(dy)\leq \int_{\R}\phi(y)\mu_t(dy).
\end{equation}
If (\ref{eq:DcrStochOrder}) holds only for non-decreasing convex functions, then $\mu$ is said to be non-decreasing in {\it the increasing convex order}. If $\mu$ is non-decreasing in the usual stochastic order, resp. in the increasing convex order, then its image $\mu^h=\left(\mu_t^h,t\geq0\right)$ under $h:\,y\longmapsto -y$ is said to be non-decreasing in {\it the decreasing stochastic order}, resp. in {\it the decreasing convex order}. We also recall the definition of the MRL ordering which resembles that of the WDS ordering. Suppose that $\mu_t$ is integrable for every $t$. The family $\mu$ is said to be non-decreasing in  {\it the MRL order}  if the family of functions $\left(\Psi_{\mu_t}^{mrl},t\in\R_+\right)$ given by
\begin{equation*}
\Psi^{mrl}_{\mu_t}(x)=\left\{
\begin{array}{ll}
\dfrac{1}{\mu_t([x,+\infty[)}\displaystyle\int_{[x,+\infty[}y\mu_t(dy)&\text{if }x<r_{\mu_t},\\
x &\text{otherwise,}
\end{array}
\right.
\end{equation*}
where $r_{\mu_t}=\inf\{z\in\R:\,\mu_t([z,+\infty[)=0\}$, is pointwise non-decreasing. Now, we define the WDS ordering as follows. Suppose that, for every $t\geq0$, $\mu_t$ is integrable and has a negative mean.
The family $\mu$ is said to be non-decreasing in the weak decreasing stochastic (WDS) order if the  family of functions $(\Psi^{wds}_{\mu_t},t\geq0)$ defined by
\begin{equation*}
\Psi^{wds}_{\mu_t}(x)=\left\{
\begin{array}{ll}
\dfrac{1}{\mu_t([x,+\infty[)}\left(\displaystyle\int_{[x,+\infty[}y\mu_t(dy)-m_{\mu_t}\right)&\text{if }x<r_{\mu_t},\\
+\infty &\text{otherwise,}
\end{array}
\right.
\end{equation*}
where $m_{\mu_t}=\int_{\R} y\mu_t(dy)$, is pointwise non-decreasing. A family of integrable real-valued random variables with negative means is said to be non-decreasing in the WDS order if the family of their respective distributions is non-decreasing in the WDS order. Observe that the definition of the WDS ordering is the same as that of the MRL ordering up to the substraction of the mean of $\mu_t$. The terminology {\it weak decreasing stochastic ordering} has been chosen since, for processes with negative mean, the usual decreasing stochastic order is strictly stronger than the WDS order. Indeed, we show that every stochastically non-increasing process with negative mean is ordered by the WDS order and we exhibit some WDS ordered processes which do not decrease stochastically. On the other hand, we prove that, for processes with negative mean, the WDS order strictly implies the decreasing convex order. In particular, WDS ordered processes with constant negative mean are necessarily stochastically constant. Such a result has been proved by Shaked-Shanthikumar \cite[Theorem 1.A.8]{ShS} for stochastically non-increasing processes. One may also define a notion of {\it weak increasing stochastic (WIS) ordering }for processes with positive means. A family of integrable probability measures $\nu=(\nu_t,t\geq0)$ with positive means is said to be non-decreasing in {\it the WIS order} if the family of functions $\left(\Psi^{wis}_{\nu_t},t\geq0\right)$ given by
$$
\forall\,t\geq0,\quad \Psi^{wis}_{\nu_t}(x)=\left\{
\begin{array}{ll}
\dfrac{1}{\nu_t(]-\infty,x])} \left(m_{\nu_t}-\displaystyle\int_{]-\infty,x]}y\nu_t(dy)\right)&\text{if }x>l_{\nu_t}\\
+\infty&\text{otherwise,}
\end{array}
\right.
$$
where $m_{\nu_t}=\int y\nu_t(dy)$ and $l_{\nu_t}=\sup\{z\in\R:\,\nu_t(]-\infty,z])=0\}$. Observe that, if $\nu^h_t$ denotes the image of $\nu_t$ under $h$, then, for every $(t,x)\in\R_+\times\R$, $\Psi^{wis}_{\nu_t}(x)=\Psi^{wds}_{\nu^h_t}(-x)$. This implies that   $\nu=(\nu_t,t\geq0)$ is non-decreasing in the WIS order if, and only if   $\nu^h=\left(\nu^h_t,t\geq0\right)$ is non-decreasing in the WDS order. As a consequence, the WIS ordering is strictly weaker than the usual stochastic ordering and strictly stronger than the increasing convex ordering. Recently, Ewald and Yor \cite[Definition 1]{EY} introduced the notion of {\it a lyrebird} and called lyrebird a process that is non-decreasing in the increasing convex order. Hence the class of lyrebirds includes strictly that of WIS ordered processes.

There is a connection between the WDS ordering and the Cox-Hobson embedding. Indeed, the Cox-Hobson stopping time $T_{\mu_t}$ that solves the Skorokhod embedding problem\footnote{The Skorokhod embedding problem, which was first stated and solved by Skorokhod \cite{Sk}, may be described as follows:
Given a Brownian motion $\left(B_v,v\geq0\right)$ and a centered target law $\mu$, does there exist a
stopping time $T$ such that $B_T$ has distribution $\mu$?} for $\mu_t$ is the first time the process $\left(B_v,S_v:=\sup_{0\leq w\leq v}B_v;v\geq0\right)$ hits the epigraph $\mathcal{E}^{wds}_{\mu_t}$ of $\Psi_{\mu_t}^{wds}$, where $(B_v,v\geq0)$ denotes a standard Brownian motion issued from $0$. Hence the family $(\mu_t,t\geq0)$ is non-decreasing in the WDS order if, and only if $(\mu_t,t\geq0)$ can be embedded in a standard Brownian motion meaning that $t\longmapsto T_{\mu_t}$ is a.s. non-decreasing. Since  each $T_{\mu_t}$ is minimal (in a sense that is made more precised in the sequel),  the process $\left(B_{T_{\mu_t}},t\geq0\right)$ is a supermartingale with the same one-dimensional marginals as $(\mu_t,t\geq0)$. Then it follows from the Jensen inequality that the WDS order is stronger than the decreasing convex order. We recover this property using a different approach. Note that $\left(B_{T_{\mu_t}},t\geq0\right)$ is Markovian when the distributions $\mu_t$, $t\in\R_+$ have densities. This follows from a similar argument than that used in Madan-Yor \cite[Theorem 2]{MY}. If there is some distribution $\mu_s$ with atoms, then the atomic part of $\mu_s$ makes some parts of $\mathcal{E}^{wds}_{\mu_s}$ vertical. Thus, for $s<t$, the future random time $T_{\mu_t}$ does not only depend of $B_{T_{\mu_s}}$ but also on $S_{T_{\mu_s}}$. Then $\left(B_{T_{\mu_t}},t\geq0\right)$ is not Markovian.
We say that two processes are associated if they have the same one-dimensional marginals. Let us mention that the problem of existence of a  supermartingale associated to a given process which is ordered by the  decreasing convex order was solved by Kellerer \cite{Kel}.

In this paper, we provide a log-concavity characterization of the WDS ordering. This characterization is the same as that obtained in Bogso \cite[Theorem 3.3]{Bo} for the MRL ordering.  

We organize the rest of the paper as follows. In the next Section, we briefly recall the Cox-Hobson \cite{CH} extension of the Az\'ema-Yor algorithm to target distributions with negative mean and, using a Madan-Yor argument, deduce that WSD ordering is a necessary and sufficient condition for an integrable process with negative mean to embeddable in Brownian motion by the generalized Azéma-Yor stopping times. Section 3 is devoted to  a characterization of the WDS ordering in terms of log-concavity and to some of its closure properties. Finally, in Section 4, we present several examples of  WDS ordered processes.

\section{WDS order and the Cox-Hobson algorithm}
\label{sec:1}
Let $(\mu_t,t\geq0)$ be a family of integrable probability measures with negative mean. For every $t\geq0$, we set $m_{\mu_t}:=\int y\mu_t(dy)$.   We shall apply the Cox-Hobson \cite{CH} extension of the Azéma-Yor algorithm to embed simultaneously all distributions $\mu_t$ in a standard Brownian motion $(B_t,t\geq0)$ started at $0$. In the zero-mean case, if $(\mu_t,t\geq0)$ is MRL ordered, then the original Azéma-Yor algorithm \cite{AY} provides a family of a.s. non-decreasing stopping times $\left(T^{AY}_{\mu_t},t\geq0\right)$ adapted to the natural filtration of $(B_t,t\geq0)$ such that, for every $t\geq0$:
\begin{enumerate}
\item[C1)] $\left(B_{T^{AY}_{\mu_t}\wedge v},v\geq0\right)$ is uniformly integrable,
\item[C2)]the law of $B_{T^{AY}_{\mu_t}}$ is $\mu_t$.
\end{enumerate}
Since Condition C1) is equivalent to $\E\left[B_{T^{AY}_{\mu_t}}|\mathcal{F}_{S}\right]=B_{S}$ for all stopping times $S\leq T^{AY}_{\mu_t}$, then $\left(B_{T^{AY}_{\mu_t}},t\geq0\right)$ is a martingale with marginals $(\mu_t,t\geq0)$. Moreover, Monroe \cite{Mo} showed that Condition C1) is equivalent to the statement that $T_{\mu_t}$ is {\it minimal} in the following sense.
\begin{definition}\label{def:minimality}
A stopping time $T$ is said to be {\it minimal } for $(B_t,t\geq0)$ if whenever $S\leq T$ is a stopping time such that $B_{T}$ and $B_{S}$ have the same distribution then $S=T$ a.s..
\end{definition}
The result of Monroe has been extended by Cox and Hobson \cite{CH} to non-centered target distribution. In particular, provided   $m_{\mu_t}<0$, the authors proved that a stopping time $T$ is minimal for $(B_t,t\geq0)$ if, and only if  $\left(B^{-}_{T\wedge v},v\geq0\right)$ is uniformly integrable, or  equivalently if $\E[B_T|\mathcal{F}_S]\leq B_S$ for all stopping times $S\leq T$.  Cox and Hobson \cite{CH} also constructed an extension of the Azéma-Yor embedding for distributions with negative means and showed that it is minimal. Moreover, the Cox-Hobson stopping time  retains the optimality properties of the original Azéma-Yor embedding. Precisely, the Cox-Hobson stopping time, denoted by $T^{CH}_{\mu_t}$, maximises the law of $\sup_{v\leq R}B_v$ for the usual stochastic order among all minimal stopping times $R$ which embed $\mu_t$. We refer the reader to the remarkable paper of Beiglböck-Cox-Huesmann \cite{BCH} where a systematic method to construct optimal skorokhod embeddings is provided. Here is a brief exposition of the Cox-Hobson algorithm for a fixed $t\geq0$. 
Consider the convex function 
\[
\forall\,x\in\R,\text{ }\pi_{\mu_t}(x)=\int_{\R}|y-x|d\mu_t(y)-m_{\mu_t}.
\]
and, for every $\theta\in[-1,1]$, define 
\[
u_{\mu_t}(\theta)=\inf\{y\in\R:\,\pi_{\mu_t}(y)+\theta(x-y)\leq\pi_{\mu_t}(x), \text{for every }x\in\R\}.
\]
Note that $\pi_{\mu_t}$ is asymptotic to and greater than the function 
\[
x\longmapsto-x1_{]-\infty,m_{\mu_t}]}(x)+(x-2m_{\mu_t})1_{]m_{\mu_t},+\infty[}(x).
\]
On the other hand, if $\theta$ is interpreted as the gradient of a tangent to $\pi_{\mu_t}$, then $u_{\mu_t}(\theta)$ is the smallest $x$ at which there exists a tangent to $\pi_{\mu_t}$ with gradient $\theta$.
Define also 
\[
\forall\,\theta\in[-1,1],\text{ }z_{\mu_t}(\theta)=\frac{\pi_{\mu_t}(u_{\mu_t}(\theta))-\theta u_{\mu_t}(\theta)}{1-\theta}
\]
and
\[
\forall\,\alpha\in\R_+,\text{ }b_{\mu_t}(\alpha)=u_{\mu_t}\left(z_{\mu_t}^{-1}(\alpha)\right),
\]
where
\[
z_{\mu_t}^{-1}(\alpha)=\inf\{\theta\in[-1,1]:\,z_{\mu_t}(\theta)\geq\alpha\}.
\]
These definitions are interpreted as follows. Consider the unique tangent to $\pi_{\mu_t}$ with gradient $\theta$. Then, $z_{\mu_t}(\theta)$ is the $x$-coordinate of the point where this tangent   crosses the line $y=x$ and $b_{\mu_t}(\alpha)$ is the  $x$-coordinate of the  left-most point of the graph of $\pi_{\mu_t}$ for which the tangent to $\pi_{\mu_t}$ at that point hits the line $y=x$ at $\alpha$. We refer to Figures 1  for a pictorial representation of $b_{\mu_t}$ in the case $m_{\mu_t}<0$.

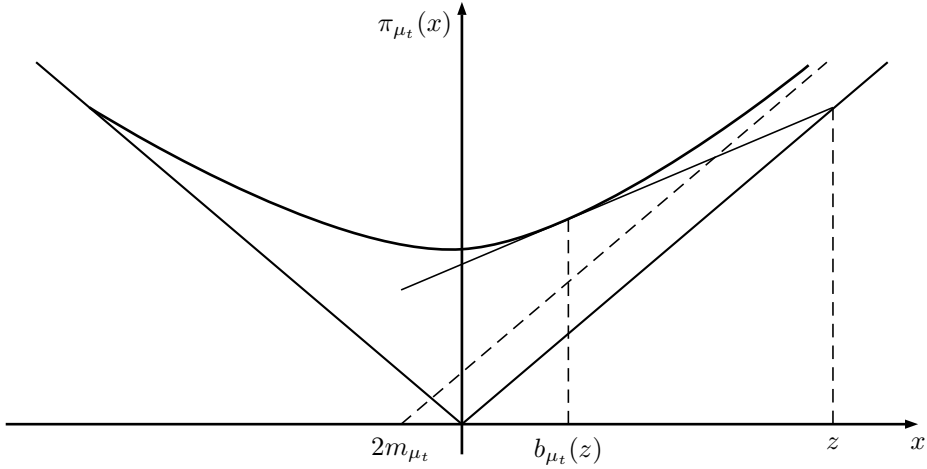
\begin{figure}


\ifx\JPicScale\undefined\def\JPicScale{.5}\fi
\psset{xunit=.4cm,yunit=.4cm}
\psset{dotsize=0.7 2.5,dotscale=1 1,fillcolor=black}
\begin{pspicture*}(-16,-2)(16,15) 
\psline[linewidth=1pt,arrowinset=0]{->}(-15,0)(15,0) 
\psline[linewidth=1pt,arrowinset=0]{->}(0,-1)(0,14)
\psline[linewidth=0.8pt](0,0)(14,12)
\psline[linewidth=0.8pt](0,0)(-14,12)
\psline[linewidth=0.6pt, linestyle=dashed](-2,0)(12,12)  
\pscurve[linewidth=1pt](-12.25,10.5)(0,5.8) (11.4,11.9)
\psline[linewidth=0.6pt](-2,4.45)(12.2,10.5)
\psline[linewidth=0.6pt, linestyle=dashed](12.2,0)(12.2,10.5)
\psline[linewidth=0.6pt, linestyle=dashed](3.5,0)(3.5,6.8)
\uput[d](12.2,0){$z$}
\uput[d](3.5,0){$b_{\mu_t}(z)$}
\uput[d](-2,0){$2m_{\mu_t}$}
\uput[d](15,0){$x$}
\uput[dl](0,14){$\pi_{\mu_t}(x)$}
\end{pspicture*}
\caption{$\pi_{\mu_t}$ for a $\mu_t$ with support bounded below and negative mean.}
\label{fig:1}       
\end{figure}

The following result has been proved by Cox-Hobson \cite{CH}.
\begin{theorem}\label{theo:CHmin}(Cox-Hobson \cite[Theorem 12]{CH}).
The stopping time $T_{\mu_t}^{CH}$ given by
\[
T_{\mu_t}^{CH}=\inf\{v\geq0:\, b_{\mu_t}(S_v)\geq B_v\},
\]
where $S_v=\sup\limits_{w\leq v}B_w$, embeds $\mu_t$ and is minimal.  
\end{theorem}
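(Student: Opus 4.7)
The plan is to verify the two assertions separately: that $B_{T^{CH}_{\mu_t}}$ has law $\mu_t$ (embedding), and that $T^{CH}_{\mu_t}$ is minimal in the sense of Definition~\ref{def:minimality}. The common thread is to mirror the original Az\'ema-Yor argument, substituting the Hardy-Littlewood transform by the tangent parametrization $\theta \mapsto \bigl(u_{\mu_t}(\theta), z_{\mu_t}(\theta)\bigr)$ of the convex function $\pi_{\mu_t}$.

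For the embedding part, I would first record that $b_{\mu_t}$ is non-decreasing, as a composition of the non-decreasing left-continuous inverse $z_{\mu_t}^{-1}$ with the non-decreasing function $u_{\mu_t}$ (the latter because $\pi_{\mu_t}$ is convex). On the event $\{T^{CH}_{\mu_t} < \infty\}$, the stopping rule forces $B_{T^{CH}_{\mu_t}} = b_{\mu_t}(S_{T^{CH}_{\mu_t}})$, so the law of $B_{T^{CH}_{\mu_t}}$ is the image under $b_{\mu_t}$ of the law of $S_{T^{CH}_{\mu_t}}$. I would identify the latter via an optional-stopping identity: applying It\^o--Tanaka to $\pi_{\mu_t}(B_v)$ and correcting by a linear function of $S_v - B_v$ produces a local martingale that closes at $T^{CH}_{\mu_t}$ thanks to the tangent-line identity $(1-\theta)\,z_{\mu_t}(\theta) = \pi_{\mu_t}(u_{\mu_t}(\theta)) - \theta\, u_{\mu_t}(\theta)$. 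Evaluating this identity on $\{S_{T^{CH}_{\mu_t}} \geq \alpha\}$ for every $\alpha$ yields an explicit formula for the tail of $S_{T^{CH}_{\mu_t}}$, and its push-forward under $b_{\mu_t}$ is $\mu_t$.

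For minimality, the Cox--Hobson criterion recalled before the statement reduces the task to checking uniform integrability of $\bigl(B^-_{T^{CH}_{\mu_t}\wedge v},\, v\geq 0\bigr)$. Since $b_{\mu_t}$ is non-decreasing and $S_v \geq 0$, we have $B_v > b_{\mu_t}(S_v) \geq b_{\mu_t}(0)$ for $v < T^{CH}_{\mu_t}$, while $B^-_{T^{CH}_{\mu_t}}$ is integrable because $\mu_t$ is. The process is thus dominated by $\max\bigl(b_{\mu_t}(0)^-,\, B^-_{T^{CH}_{\mu_t}}\bigr)$, which yields the required uniform integrability. Almost-sure finiteness of $T^{CH}_{\mu_t}$, a prerequisite for both claims, follows from $m_{\mu_t} < 0$: this hypothesis forces $z_{\mu_t}(\theta) \to +\infty$ as $\theta \uparrow 1$, so the moving barrier $b_{\mu_t}(S_v)$ is eventually caught by every Brownian path since $\limsup_{v \to \infty} S_v = +\infty$.

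The principal obstacle I anticipate is the law identification in the embedding step when $\mu_t$ has atoms: then $\pi_{\mu_t}$ acquires corners, $u_{\mu_t}$ is genuinely discontinuous, and the It\^o--Tanaka computation must be justified through a regularization and a dominated-convergence limit. Similarly, near the boundary slopes $\theta = \pm 1$ one has to truncate and pass to the limit in order to apply optional stopping to a local martingale that need not be uniformly integrable before stopping. These technicalities are handled in detail in \cite{CH}, and the theorem is quoted here in the form needed for the sequel.
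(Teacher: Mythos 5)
The paper does not prove this statement at all: it is quoted directly from Cox--Hobson \cite[Theorem 12]{CH}, so there is no internal argument to compare yours against. Taken on its own terms, your outline of the embedding half (tangent parametrization of $\pi_{\mu_t}$, identification of the law of $S_{T^{CH}_{\mu_t}}$ by an optional-stopping identity, push-forward under $b_{\mu_t}$, with the atom/regularization issues deferred to \cite{CH}) is a reasonable sketch of the known proof, and your finiteness argument is essentially right.

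The minimality half, however, has a genuine gap. Your uniform-integrability claim rests on the bound $B_v> b_{\mu_t}(S_v)\ge b_{\mu_t}(0)$ for $v<T^{CH}_{\mu_t}$ and the resulting domination of $B^-_{T^{CH}_{\mu_t}\wedge v}$ by $\max\bigl(b_{\mu_t}(0)^-,B^-_{T^{CH}_{\mu_t}}\bigr)$. But $b_{\mu_t}(0)$ is not a finite constant: with the definitions of Section 2 one has $z_{\mu_t}(\theta)\ge0$ for all $\theta$, hence $z_{\mu_t}^{-1}(0)=-1$, and the slope $-1$ is attained (if at all) only in the left tail of $\pi_{\mu_t}$, so $b_{\mu_t}(0)$ must be read as $-\infty$; more to the point, $\inf_{\alpha>0}b_{\mu_t}(\alpha)$ equals the left endpoint of the support of $\mu_t$. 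Replacing $b_{\mu_t}(0)$ by this limit salvages your argument only when $\mu_t$ is bounded below. When the support of $\mu_t$ is unbounded below --- exactly the case where minimality is delicate --- the barrier $b_{\mu_t}(S_v)$ is arbitrarily low while $S_v$ is small, the pre-$T$ path has no constant (nor any obvious integrable) lower envelope of the form you propose, and uniform integrability of $\bigl(B^-_{T^{CH}_{\mu_t}\wedge v},v\ge0\bigr)$ is precisely the nontrivial content of Cox and Hobson's minimality analysis: their equivalence of minimality with this uniform integrability reduces the problem but does not verify it for $T^{CH}_{\mu_t}$, which in \cite{CH} requires a separate truncation/limiting argument. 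So either restrict the minimality step to target laws bounded below, or quote that part of \cite[Theorem 12]{CH} outright, as the paper does for the whole statement.
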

 
\begin{remark}\text{}
\item[1)] Let $b_{\mu_t}^{-1}$ be the function defined on $\R$ by
\[
b_{\mu_t}^{-1}(x)=\inf\{z\geq0:\,b_{\mu_t}(z)\geq x\}.
\]
Then, as in Cox \cite[Chapter 3, Section 3.2]{Co}, one may show that  $b_{\mu_t}^{-1}=\Psi^{wds}_{\mu_t}$, which implies that 
\begin{equation*}
T_{\mu_t}^{CH}=\inf\{v\geq0:\,S_v\geq\Psi^{wds}_{\mu_t}(B_v)\}.
\end{equation*} 
Observe that $T_{\mu_t}^{CH}$ is the first time the process $(B_v,S_v;v\geq0)$ hits a barrier, that is the epigraph of $\Psi_{\mu_t}^{wds}$.  
\item[2)]We have $\lim\limits_{x\to-\infty}\Psi_{\mu_t}^{wds}(x)=0$. Moreover,  $\Psi^{wds}_{\mu_t}$ is left-continuous, non-decreasing and non-negative on $]-\infty,r_{\mu_t}[$. 
\end{remark}
 
Now, suppose that, for every $t\geq0$, $m_{\mu_t}<0$ and that the family of stopping times  $(T_{\mu_t}^{CH},t\geq0)$ is a.s. non-decreasing. Since $T^{CH}_{\mu_t}$ is minimal, then, Cox and Hobson \cite[Theorem 3]{CH} prove that $\left(B^{-}_{T_{\mu_t}^{CH}\wedge v},v\geq0\right)$ is uniformly integrable which is equivalent to $\E\left[B_{T^{CH}_{\mu_t}}|\mathcal{F}_S\right]\leq B_S$ for all stopping times $S\leq T^{CH}_{\mu_t}$. In particular, $\E\left[B_{T^{CH}_{\mu_t}}|\mathcal{F}_{T^{CH}_{\mu_s}}\right]\leq B_{T^{CH}_{\mu_s}}$ for every $s\leq t$ which means that  $\left(B_{T^{CH}_{\mu_t}},t\geq0\right)$ is a   supermartingale with marginals $(\mu_t,t\geq0)$. But, the family   $\left(T_{\mu_t}^{CH},t\geq0\right)$ is a.s. non-decreasing if, and only if  $(\Psi^{wds}_{\mu_t},t\geq0)$ is pointwise non-decreasing which means that the family $(\mu_t,t\geq0)$ is WDS ordered.  
 
The interest of the preceeding results rely on the existence of numerous  WDS processes. We present many of them in Section 4. Before doing that, we  provide  an equivalent log-concavity property to WDS ordering and give some closure results related to this ordering.

\section{Log-concavity properties of the WDS ordering }
We give a characterization of the WDS ordering in terms of total positivity of order 2. We then deduce several preservation properties which are helpful to construct other families of WDS processes. Let us start with some general facts about totally positive of order 2 (TP$_2$) functions.
\subsection{Definition and properties of TP$_2$ functions}
We define and give some examples of TP$_2$ functions. We also present a composition result  which allows to generate other TP$_2$ functions.
\begin{definition}
Let $I$ and $I'$ be two intervals of $\R$. Let $L$ be a real function defined on $I\times I'$. The function $L$ is said to be totally positive of order 2 (TP$_2$) on $I\times I'$ if, for every $x_1\leq x_2$ elements of $I$ and every $y_1\leq y_2$ elements of $I'$,
\begin{equation}\label{eq:defiTP2funct}
\det\left(
\begin{array}{cc}
L(x_1,y_1)&L(x_1,y_2)\\
L(x_2,y_1)&L(x_2,y_2)
\end{array}
\right)\geq0.
\end{equation}
\end{definition}
\noindent
There are many examples of TP$_2$ functions in the literature among which there are TP$_2$ transition densities and TP$_2$ integrated survival functions.
\begin{example}\label{exa:ExaTP2Funct}\text{}
\item[1.]Let $f$ be a density function defined on $\R$. Then $f$ is log-concave if, and only if the function $(x,y)\longmapsto f(x-y)$ is TP$_2$ on $\R\times\R$, i.e. for every real numbers $x_1\leq x_2$ and $y_1\leq y_2$,
\begin{equation}\label{eq:defiPF2Funct}
\det\left(
\begin{array}{cc}
f(x_1-y_1)&f(x_1-y_2)\\
f(x_2-y_1)&f(x_2-y_2)
\end{array}
\right)\geq0.
\end{equation}
\item[2.]Let $(\Lambda_t,t\geq0)$ be a one-dimensional diffusion whose transition kernel is of the form $P_t(x,dy)=p_t(x,y)dy$, where $p_t$ is continuous. Then $p_t$ is TP$_2$ on $\R\times \R$ (see e.g. Karlin-Taylor \cite[Chap. 15, Problem 21]{KaT}). If we suppose in addition that $(\Lambda_t,t\geq0)$ is $\R_+$-valued and starts at $0$, then, as stated in Karlin \cite[Theorem 5.2]{KA}, the function $(t,x)\longmapsto p_t(0,x)$ is TP$_2$ on $\R_+^{\ast}\times \R_+$.
\item[3.]Let $(X_t,t\geq0)$ be an integrable real-valued process. Then $(X_t,t\geq0)$ is non-decreasing in the MRL order if, and only if the integrated survival function $(t,x)\longmapsto\E\left[(X_t-x)^+\right]$ is TP$_2$ on $\R_+\times\R$ (see Bogso \cite[Theorem 3.3]{Bo}).
\end{example} 
\noindent
Other examples of TP$_2$ functions may be found in \cite[Sections 2-5]{KA}  and \cite[Section 2.2]{Bo1}. The following result follows directly from the Cauchy-Binet formula (see e.g. \cite[Paragraph 1.1]{Ma} or \cite[Problem 11.1.28]{PS}) and states that the TP$_2$ property is preserved by composition.
\begin{prop}\label{prop:Cauchy-BinetTP2}
Let $I$, $I'$ and $J$ be sub-intervals of $\R$, and let $L:\,I\times J\to\R_+$ and $M:\,J\times I'\to\R_+$ be two TP$_2$ functions. Then, for every $\sigma$-finite measure $\eta$ such that 
$$
\int_JL(x,z)M(z,y)\,\eta(dz)\quad\text{is finite,}
$$
the function $N$ defined on $I\times I'$ by $N(x,y)=\displaystyle\int_JL(x,z)M(z,y)\,\eta(dz)$ is TP$_2$.
\end{prop}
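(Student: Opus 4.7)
The plan is to verify the TP$_2$ inequality directly by expanding the product of the two relevant integrals into a double integral over $J\times J$ and then symmetrizing, which is the essence of the Cauchy--Binet formula in this continuous setting.

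Fix $x_1\leq x_2$ in $I$ and $y_1\leq y_2$ in $I'$. First I would write
\[
N(x_1,y_1)N(x_2,y_2)-N(x_1,y_2)N(x_2,y_1)=\iint_{J\times J}F(z_1,z_2)\,\eta(dz_1)\eta(dz_2),
\]
where
\[
F(z_1,z_2)=L(x_1,z_1)L(x_2,z_2)M(z_1,y_1)M(z_2,y_2)-L(x_1,z_1)L(x_2,z_2)M(z_1,y_2)M(z_2,y_1).
\]
Fubini applies because the hypothesis gives finiteness of each $N(x_i,y_j)$. Next I would split the integral over $\{z_1\leq z_2\}$ and $\{z_1>z_2\}$, interchange the names of $z_1$ and $z_2$ on the second region, and add the two contributions. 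The symmetrization yields
\[
\iint_{\{z_1\leq z_2\}}\!\!\det\!\begin{pmatrix}L(x_1,z_1)&L(x_1,z_2)\\L(x_2,z_1)&L(x_2,z_2)\end{pmatrix}\det\!\begin{pmatrix}M(z_1,y_1)&M(z_1,y_2)\\M(z_2,y_1)&M(z_2,y_2)\end{pmatrix}\eta(dz_1)\eta(dz_2),
\]
which is the familiar Cauchy--Binet identity.

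Finally, on the region $\{z_1\leq z_2\}$, both $2\times2$ determinants are nonnegative by the TP$_2$ assumption on $L$ and on $M$ respectively, hence so is their product, and the double integral is nonnegative. This gives
\[
\det\begin{pmatrix}N(x_1,y_1)&N(x_1,y_2)\\N(x_2,y_1)&N(x_2,y_2)\end{pmatrix}\geq 0,
\]
which is the TP$_2$ property of $N$ on $I\times I'$.

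The computation is essentially bookkeeping; the only step that requires a moment of care is the justification of Fubini, which follows at once from the assumed integrability of $z\mapsto L(x,z)M(z,y)$ for each of the four pairs $(x_i,y_j)$. No obstacle of substance is expected, since the proposition is a classical avatar of Cauchy--Binet and the positivity of the two determinant factors on $\{z_1\leq z_2\}$ is precisely what the TP$_2$ hypotheses deliver.
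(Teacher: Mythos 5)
Your proof is correct: the paper offers no written argument for this proposition, merely asserting that it ``follows directly from the Cauchy--Binet formula'' with references, and your symmetrization of the double integral over $\{z_1\leq z_2\}$ is exactly the standard derivation of that continuous Cauchy--Binet (basic composition) formula, so you have simply made the cited step explicit. The Fubini justification via the finiteness of the four quantities $N(x_i,y_j)$ is the right bookkeeping and closes the argument.
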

\noindent
Here is another transformation which preserves the property of being TP$_2$.
\begin{prop}\label{prop:InterPolTP2}
Let $I$ and $I'$ be two sub-intervals of $\R$, and let $L:\,I\times I'\to\R_+$ be a TP$_2$ function. For every positive integer $r$ and every increasing sequence $\mathbf{a}=(a_0,a_1,\cdots,a_r)$ in $I$, the function $L^{\mathbf{a}}$ defined on $I\times I'$ by: For every $y\in I'$,
\begin{equation}\label{eq:defiCensoringGle}
L^{\mathbf{a}}(x,y)=\left\{
\begin{array}{cc}
L(x,y)&\text{if }x\notin[a_0,a_r] \\
\dfrac{a_1- x}{a_1-a_0}L(a_0,y)+\dfrac{x-a_0}{a_1-a_0}L(a_{1},y)&\text{if }x\in[a_0,a_{1}]\\
\cdots\cdots\cdots\cdots  & \cdots\cdots  \\
\dfrac{a_r- x}{a_r- a_{r-1}}L(a_{r-1},y)+\dfrac{x- a_{r-1}}{a_r- a_{r-1}}L(a_{r},y)&\text{if }x\in[a_{r-1},a_{r}]
\end{array}
\right.
\end{equation}
is TP$_2$.
\end{prop}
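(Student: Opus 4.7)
The plan is to reduce the general statement to the single-interval case and then proceed by a direct case analysis on where the test points $x_1 \le x_2$ lie with respect to the knots. For a single interval $[a,b] \subset I$, write $L^{[a,b]}$ for the function obtained by replacing $L(x,y)$ with the linear interpolation $\frac{b-x}{b-a}L(a,y)+\frac{x-a}{b-a}L(b,y)$ for $x \in [a,b]$ and leaving $L$ unchanged elsewhere. I claim that iterating this operation across the successive intervals $[a_0,a_1], [a_1,a_2], \ldots, [a_{r-1},a_r]$ produces exactly $L^{\mathbf{a}}$. The point is that when $[a_{i-1}, a_i]$ is processed, the values at its endpoints are either still equal to $L(a_{i-1}, \cdot)$ and $L(a_i, \cdot)$ (because the endpoints of previously treated intervals are knots, and interpolation there does not alter the knot values, while $a_i$ has not yet been touched). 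A straightforward induction on $r$ then reduces the proposition to proving that $L^{[a,b]}$ is TP$_2$ whenever $L$ is.

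For the single-interval case, fix $x_1 \le x_2$ in $I$ and $y_1 \le y_2$ in $I'$, and split into four situations. If both $x_1, x_2 \notin (a,b)$, then $L^{[a,b]}(x_i,y_j)=L(x_i,y_j)$ and the $2\times 2$ minor is directly TP$_2$ for $L$. If $x_1 \le a$ and $x_2 \in [a,b]$, expand $L^{[a,b]}(x_2,y_j)$ as a convex combination; the resulting determinant becomes
\[
\frac{b-x_2}{b-a}\,\det\!\begin{pmatrix}L(x_1,y_1) & L(x_1,y_2)\\ L(a,y_1) & L(a,y_2)\end{pmatrix}+\frac{x_2-a}{b-a}\,\det\!\begin{pmatrix}L(x_1,y_1) & L(x_1,y_2)\\ L(b,y_1) & L(b,y_2)\end{pmatrix},
\]
which is a non-negative combination of TP$_2$ minors of $L$ (using $x_1 \le a \le b$). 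The case $x_1 \in [a,b]$, $x_2 \ge b$ is symmetric.

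The main algebraic step, and the only one that requires a little care, is the case where both $x_1, x_2 \in [a,b]$. Writing $\alpha = \tfrac{b-x_1}{b-a}$ and $\gamma = \tfrac{b-x_2}{b-a}$ (so $\alpha \ge \gamma$), and setting $p=L(a,y_1), q=L(a,y_2), u=L(b,y_1), v=L(b,y_2)$, a direct expansion yields
\[
L^{[a,b]}(x_1,y_1)L^{[a,b]}(x_2,y_2)-L^{[a,b]}(x_1,y_2)L^{[a,b]}(x_2,y_1)=(\alpha-\gamma)(pv-qu).
\]
Both factors are non-negative: $\alpha \ge \gamma$ comes from $x_1\le x_2$, and $pv-qu \ge 0$ is exactly TP$_2$ of $L$ on $(a,b)\times(y_1,y_2)$. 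This closes the argument. The only obstacle I anticipate is the bookkeeping justifying that the iterated single-interval procedure really reproduces the piecewise definition (\ref{eq:defiCensoringGle}); after that, the TP$_2$ verification is elementary and combinatorial.
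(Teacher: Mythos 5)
Your proposal is correct and follows essentially the same route as the paper: reduce to the single-interval interpolation via the observation that the $r$-knot transformation is the composition of the one-interval ones, then verify the $2\times 2$ minor inequality by the same four-case analysis (your identity $(\alpha-\gamma)(pv-qu)$ is exactly the paper's $\frac{x_2-x_1}{a_1-a_0}L\bigl(\begin{smallmatrix}a_0&a_1\\ y_1&y_2\end{smallmatrix}\bigr)$ in the interior case). No substantive differences to report.
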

\begin{proof}
We first treat the case $r=1$. We wish to show that the function $L^{(a_0,a_1)}$ defined on $I\times I'$ by: for every $y\in I'$,
\begin{equation}\label{eq:defi01Censoring}
L^{(a_0,a_1)}(x,y)=\left\{
\begin{array}{cl}
L(x,y)&\text{if }x\notin[a_0,a_1] \\
\dfrac{a_1-x}{a_1-a_0}L(a_0,y)+\dfrac{x-a_0}{a_1-a_0}L(a_{1},y)&\text{otherwise}
\end{array}
\right.
\end{equation}
is TP$_2$. \\ Let $x_1\leq x_2$ be elements of $I$ and let $y_1\leq y_2$ be elements of $I'$. We distinguish four cases.
\begin{enumerate}
\item[(i)]If $x_1\notin[a_0,a_1]$ and $x_2\notin[a_0,a_1]$, then
\begin{align*}
L^{(a_0,a_1)}\left(
\begin{array}{cc}
x_1&x_2\\
y_1&y_2
\end{array}
\right)&:=L^{(a_0,a_1)}(x_1,y_1)L^{(a_0,a_1)}(x_2,y_2)-L^{(a_0,a_1)}(x_1,y_2)L^{(a_0,a_1)}(x_2,y_1)\\
&=L(x_1,y_1)L(x_2,y_2)-L(x_1,y_2)L(x_2,y_1)\\&:=L\left(
\begin{array}{cc}
x_1&x_2\\
y_1&y_2
\end{array}
\right)\geq0\quad(\text{since }L\text{ is TP}_2).
\end{align*}
\item[(ii)]If $x_1<a_0\leq x_2\leq a_1$, then
\begin{align*}
 L^{(a_0,a_1)}\left(
\begin{array}{cc}
x_1&x_2\\
y_1&y_2
\end{array}
\right) 
 =\dfrac{x_2-a_0}{a_1-a_0}L\left(
\begin{array}{cc}
x_1&a_1\\
y_1&y_2
\end{array}
\right)+\dfrac{a_1-x_2}{a_1-a_0}L\left(
\begin{array}{cc}
x_1&a_0\\
y_1&y_2
\end{array}
\right)\geq0.
\end{align*}
\item[(iii)]If $a_0\leq x_1\leq a_1<x_2$, then
\begin{align*}
L^{(a_0,a_1)}\left(
\begin{array}{cc}
x_1&x_2\\
y_1&y_2
\end{array}
\right) 
=\dfrac{x_1-a_0}{a_1-a_0}L\left(
\begin{array}{cc}
a_1&x_2\\
y_1&y_2
\end{array}
\right)+\dfrac{a_1-x_1}{a_1-a_0}L\left(
\begin{array}{cc}
a_0&x_2\\
y_1&y_2
\end{array}
\right)\geq0.
\end{align*}
\item[(iv)]If $a_0\leq x_1\leq x_2\leq a_1$, then
$$
L^{(a_0,a_1)}\left(
\begin{array}{cc}
x_1&x_2\\
y_1&y_2
\end{array}
\right)=\dfrac{x_2-x_1}{a_1-a_0}L\left(
\begin{array}{cc}
a_0&a_1\\
y_1&y_2
\end{array}
\right)\geq0.
$$
\end{enumerate}
We deduce from (i)-(iv) that $L^{(a_0,a_1)}$ is TP$_2$. Moreover, if $\mathcal{T}^{\mathbf{a}}$ and $\mathcal{T}^{a_0,a_1}$ denote the transformations defined by (\ref{eq:defiCensoringGle}) and (\ref{eq:defi01Censoring}) respectively, then $\mathcal{T}^{\mathbf{a}}$ equals the $r$-fold composition $\mathcal{T}^{a_0,a_1}\circ\mathcal{T}^{a_1,a_2}\circ\cdots\circ \mathcal{T}^{a_{r-1},a_r}$. As a consequence, $L^{\mathbf{a}}$ is also TP$_2$.
\end{proof}  
\subsection{A log-concavity characterization of the WDS ordering}
For a family $\mu=(\mu_t,t\geq0)$ of integrable probability measures, we consider its integrable survival function $C_\mu$ given by:
\[
\forall\,(t,x)\in\R_+\times\R,\,C_{\mu}(t,x)=\int_{[x,+\infty[}(y-x)\mu_t(dy)
\]
and the function $K_\mu$ defined by
\begin{equation}\label{eq:starMRLfunct}
\forall\,(t,x)\in\R_+\times\R,\,K_{\mu}(t,x)=C_{\mu}(t,x)-m_{\mu_t},
\end{equation}
where $m_{\mu_t}$ denotes the mean of $\mu_t$. As stated in the next result, borrowed from  Hirsch-Roynette \cite[Section 2]{HR} (see also Müller-Stoyan \cite[Theorem 1.5.10]{MS}), the function $C_{\mu}$ determines entirely the family $(\mu_t,t\geq0)$. 
\begin{prop}\label{prop:DefCallFunct}
Let $\nu$ be an integrable probability measure and let $C_{\nu}$ denote the integrated survival function of $\nu$, i.e.  $C_{\nu}(x)=\displaystyle\int_{[x,+\infty[}(y-x)\nu(dy)$ for every $x\in\R$. 
Then $C_{\nu}$ enjoys the following properties:
\begin{enumerate}
\item[i)]$C_{\nu}$ is a convex, nonnegative function on $\R$,
\item[ii)]$\lim\limits_{x\to+\infty}C_{\nu}(x)=0$,
\item[iii)]there exists $l\in\R$ such that $\lim\limits_{x\to-\infty}(C_{\nu}(x)+x)=l$.  
\end{enumerate}
Conversely, if a function $C$ satisfies the above three properties, then there exists a unique integrable probability measure $\nu$ such that $C_{\nu}=C$, i.e. $C$ is the integrated survival function of $\nu$.
Precisely, $\nu$ is the second order derivative of $C$ in the sense of distributions, and $l=\int_{\R}y\nu(dy)$. 
\end{prop}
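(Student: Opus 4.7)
The plan is to split the proof into the two classical directions. For the direct direction, I would rewrite $C_{\nu}(x)=\int_{\R}(y-x)^{+}\,\nu(dy)$ and read off each property: nonnegativity is immediate, convexity follows because $x\mapsto (y-x)^{+}$ is convex for each $y$ (and convexity is stable under mixtures), and $C_{\nu}(x)\to 0$ as $x\to+\infty$ by monotone convergence since $(y-x)^{+}\downarrow 0$. For property iii), I would use the pointwise identity $(y-x)^{+}=(y-x)+(x-y)^{+}$ to obtain
\[
C_{\nu}(x)+x=\int_{\R}y\,\nu(dy)+\int_{\R}(x-y)^{+}\,\nu(dy),
\]
and then let $x\to-\infty$ using monotone convergence again; this shows that the limit exists and equals $l=m_{\nu}=\int y\,\nu(dy)$.

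For the converse, given a function $C$ satisfying i)--iii), I would define $\nu$ to be the distributional second derivative $C''$, which is a nonnegative Radon measure because $C$ is convex. Writing $F:=C'_{+}$ (the right derivative), $F$ is non-decreasing and right-continuous, and we have $\nu(]a,b])=F(b)-F(a)$ for $a<b$. The key step is to identify the asymptotics of $F$ from properties ii) and iii). Since $C$ is nonnegative, convex and tends to $0$ at $+\infty$, its right derivative must satisfy $F(+\infty)=0$ (otherwise linear growth would contradict ii)); and since $C(x)+x\to l$ with $C$ convex, one obtains $F(-\infty)=-1$. This yields
\[
\nu(\R)=F(+\infty)-F(-\infty)=1,
\]
so $\nu$ is a probability measure.

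Next I would check that $C_{\nu}=C$. Using Fubini and the identity $\nu([t,+\infty[)=-F(t)$ (up to a countable set of points that does not affect the Lebesgue integral), I get
\[
C_{\nu}(x)=\int_{[x,+\infty[}\!\!(y-x)\,\nu(dy)=\int_{x}^{+\infty}\!\!\nu([t,+\infty[)\,dt=-\int_{x}^{+\infty}\!\!F(t)\,dt.
\]
On the other hand, since $C$ is convex with $C(+\infty)=0$, the fundamental theorem for convex functions gives $C(x)=-\int_{x}^{+\infty}F(t)\,dt$, so $C=C_{\nu}$. Integrability of $\nu$ now follows from the direct direction applied to $\nu$, and by comparing the limits of $C(x)+x$ and $C_{\nu}(x)+x$ as $x\to-\infty$ we identify $l=\int y\,\nu(dy)$. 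Finally, uniqueness is automatic since $\nu$ is recovered from $C$ by taking the distributional second derivative.

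The only delicate point I expect is the justification that properties ii) and iii) force $F(+\infty)=0$ and $F(-\infty)=-1$ respectively; this requires combining convexity with the asymptotic behaviour of $C$ (e.g.\ via the inequality $C(x)-C(x_0)\geq F(x_0)(x-x_0)$), but once these two limits are in hand, the remainder of the argument is a routine application of Fubini's theorem and standard facts about convex functions.
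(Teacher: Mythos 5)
The paper does not actually prove this proposition: it is quoted from Hirsch--Roynette \cite[Section 2]{HR} and M\"uller--Stoyan \cite[Theorem 1.5.10]{MS}, so there is no internal argument to compare yours against. Your proof is the standard self-contained one and is essentially correct: the direct direction via $(y-x)^{+}$ and monotone (or dominated) convergence is fine, and in the converse the identification of the asymptotic slopes $F(+\infty)=0$ and $F(-\infty)=-1$ from properties ii) and iii) via supporting lines --- the point you yourself flag as delicate --- works exactly as you indicate, as does the Fubini computation $C_{\nu}(x)=\int_{x}^{+\infty}\nu(]t,+\infty[)\,dt=-\int_{x}^{+\infty}F(t)\,dt=C(x)$.

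The one step you should rephrase is the claim that ``integrability of $\nu$ now follows from the direct direction applied to $\nu$'': the direct direction has integrability of $\nu$ as a hypothesis, so as written this is circular. The repair is precisely the comparison of limits you mention in the same sentence, just carried out first: since $C_{\nu}=C$ is finite-valued, $\int_{[0,+\infty[}y\,\nu(dy)=C(0)<+\infty$ takes care of the positive part; and since $\nu(\R)=1$ one may write
\[
C_{\nu}(x)+x=\int_{\R}\bigl((y-x)^{+}+x\bigr)\,\nu(dy)=\int_{\R}\max(x,y)\,\nu(dy),
\]
which decreases to $\int_{\R}y\,\nu(dy)\in[-\infty,+\infty[$ as $x\to-\infty$ by monotone convergence (the first term $\max(x_{0},y)\leq(y-x_{0})^{+}+|x_{0}|$ being integrable). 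Property iii) forces this limit to equal the finite number $l$, which simultaneously yields integrability of the negative part and the identification $l=\int_{\R}y\,\nu(dy)$. With that reordering the converse is complete, and uniqueness follows, as you say, from $\nu=C''$ in the distributional sense.
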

 
Here is a characterization of WDS ordering in terms of log-concavity for  probability measures with negative mean. This characterization is the same as that of the MRL ordering obtained in Bogso \cite[Theorem 3.3]{Bo}.
\begin{theorem}\label{theo:WDSTPTail}
Suppose that, for every $t\geq0$, $m_{\mu_t}:=\int_{\R}y\mu_t(dy)<0$. Then, the family $(\mu_t,t\geq0)$ is non-decreasing in the WDS order if and only if the function $K_{\mu}$ defined by (\ref{eq:starMRLfunct}) is TP$_2$,
i.e. for every $0\leq t_1\leq t_2$ and $x_1\leq x_2$,
\begin{equation}\label{eq:starMRLTailTP2}
\det
\begin{pmatrix}
K_{\mu}(t_1,x_1)&K_{\mu}(t_1,x_2)\\
K_{\mu}(t_2,x_1)&K_{\mu}(t_2,x_2)
\end{pmatrix}
\geq0.
\end{equation}
\end{theorem}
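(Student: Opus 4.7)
My plan is to reduce both the WDS ordering and the TP$_2$ property of $K_\mu$ to the same pointwise inequality involving $K_\mu$ and the survival function $\overline{F}_\mu(t,x):=\mu_t([x,+\infty[)$. First I would record the elementary identities: by Fubini, $C_\mu(t,x)=\int_x^{+\infty}\overline{F}_\mu(t,u)\,du$, so $K_\mu(t,\cdot)$ is convex, strictly positive (since $-m_{\mu_t}>0$) and has left $x$-derivative $-\overline{F}_\mu(t,x)$, which agrees with the classical $x$-derivative Lebesgue-almost everywhere. For $x<r_{\mu_t}$ a direct computation rewrites the WDS function as
$$
\Psi^{wds}_{\mu_t}(x)=x+\frac{K_\mu(t,x)}{\overline{F}_\mu(t,x)},
$$
so the WDS ordering amounts to the pointwise inequality
$$
(\ast)\qquad K_\mu(t_1,x)\overline{F}_\mu(t_2,x)\leq K_\mu(t_2,x)\overline{F}_\mu(t_1,x)
$$
for every $0\leq t_1\leq t_2$ and every $x<r_{\mu_{t_1}}\wedge r_{\mu_{t_2}}$, together with the endpoint compatibility that $r_{\mu_t}$ is non-increasing in $t$ (since $\Psi^{wds}_{\mu_t}(x)=+\infty$ precisely for $x\geq r_{\mu_t}$).

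Secondly, I would rewrite TP$_2$ via log-supermodularity. Since $K_\mu>0$, inequality (\ref{eq:starMRLTailTP2}) is equivalent to saying that, for each pair $t_1\leq t_2$, the map $x\mapsto\log K_\mu(t_2,x)-\log K_\mu(t_1,x)$ is non-decreasing in $x$. By absolute continuity of $K_\mu(t,\cdot)$ (inherited from convexity) and strict positivity, the fundamental theorem of calculus yields
$$
\bigl[\log K_\mu(t_2,x_2)-\log K_\mu(t_1,x_2)\bigr]-\bigl[\log K_\mu(t_2,x_1)-\log K_\mu(t_1,x_1)\bigr]=\int_{x_1}^{x_2}\!\left(\frac{\overline{F}_\mu(t_1,u)}{K_\mu(t_1,u)}-\frac{\overline{F}_\mu(t_2,u)}{K_\mu(t_2,u)}\right)du.
$$
Thus TP$_2$ holds if and only if the integrand is $\geq 0$ for Lebesgue-a.e.\ $u$. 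Using left-continuity of $\overline{F}_\mu(t,\cdot)$ and continuity of $K_\mu(t,\cdot)$ in $u$, this a.e.\ statement lifts to the pointwise statement $(\ast)$ on the common domain $\{u<r_{\mu_{t_1}}\wedge r_{\mu_{t_2}}\}$.

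Thirdly, I must reconcile the behaviour at and beyond the right-endpoints $r_{\mu_t}$. Both properties force $t\mapsto r_{\mu_t}$ to be non-increasing: for WDS this follows from the convention above, and for TP$_2$ I would argue by contradiction. If $r_{\mu_{t_1}}<r_{\mu_{t_2}}$ for some $t_1<t_2$, choose $x_1=r_{\mu_{t_1}}$ and $x_2\in(r_{\mu_{t_1}},r_{\mu_{t_2}})$; then $K_\mu(t_1,x_1)=K_\mu(t_1,x_2)=-m_{\mu_{t_1}}$, whereas strict monotonicity of $K_\mu(t_2,\cdot)$ on $(-\infty,r_{\mu_{t_2}})$ gives $K_\mu(t_2,x_1)>K_\mu(t_2,x_2)$, which violates (\ref{eq:starMRLTailTP2}). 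Combining the three steps, both WDS and TP$_2$ reduce to $(\ast)$ together with monotonicity of $t\mapsto r_{\mu_t}$, hence to each other.

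The main obstacle I anticipate is the careful handling of the non-smooth case: $K_\mu$ need not be differentiable in $x$ and $\overline{F}_\mu$ may jump at atoms of $\mu_t$, so one cannot simply differentiate the WDS ratio. Convexity of $K_\mu(t,\cdot)$ (which supplies absolute continuity and left/right derivatives everywhere) together with left-continuity of $\overline{F}_\mu(t,\cdot)$ are exactly what is needed to turn the a.e.\ inequality produced by the FTC step into a genuine pointwise one, and thereby close the equivalence.
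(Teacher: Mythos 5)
Your proposal is correct and follows essentially the same route as the paper: both reduce the equivalence to the pointwise inequality $K_\mu(t_1,x)\,\mu_{t_2}([x,+\infty[)\leq K_\mu(t_2,x)\,\mu_{t_1}([x,+\infty[)$, obtained from the identity $\Psi^{wds}_{\mu_t}(x)=x+K_\mu(t,x)/\mu_t([x,+\infty[)$ by differentiating the ratio (equivalently, the log-difference) of $K_\mu(t_2,\cdot)$ and $K_\mu(t_1,\cdot)$, together with the monotonicity of $t\longmapsto r_{\mu_t}$. The only difference is technical packaging: the paper works with the left-derivative of $G_{s,t}=K_\mu(t,\cdot)/K_\mu(s,\cdot)$ and continuity, while you use absolute continuity, the fundamental theorem of calculus and an a.e.-to-pointwise lift via left-continuity of the survival function, which is equally valid.
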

\begin{proof}
We first assume that the family $(\Psi^{wds}_{\mu_v},v\geq0)$ is pointwise non-decreasing. Let $0\leq s\leq t$ and $x\leq y$ be fixed. We wish to prove that 
\begin{equation}\label{eq:starMRLpr0}
K_\mu(s,x)K_\mu(t,y)\geq K_\mu(s,y)K_\mu(t,x).
\end{equation}
We start by observing that $r_{\mu_t}\leq r_{\mu_s}$. Indeed, by hypothesis and by definition of $\Psi^{wds}_{\mu_s}$, we have $\Psi^{wds}_{\mu_t}(r_{\mu_s})\geq \Psi^{wds}_{\mu_s}(r_{\mu_s})=+\infty$. Moreover, $x\geq r_{\mu_t}$ is equivalent to $C_{\mu}(t,x)=0$, which in turn is equivalent to $K_{\mu}(t,x)=-m_{\mu_t}$. Now, since $z\longmapsto K_\mu(s,z)$ and $z\longmapsto K_\mu(t,z)$ are continuous and positive, then, to obtain (\ref{eq:starMRLpr0}), it suffices to show that
\begin{equation}\label{eq:starMRLpr12}
G_{s,t}:z\longmapsto\frac{K_\mu(t,z)}{K_\mu(s,z)}\text{ is non-decreasing on both }]-\infty,r_{\mu_t}[\text{ and }[r_{\mu_t},+\infty[.
\end{equation}
Indeed, suppose that (\ref{eq:starMRLpr12}) holds; if $x$ and $y$ are taken such that  $x<r_{\mu_t}\leq y$, then, denoting by $G_{s,t}\left(r^-_{\mu_t}\right)$ the left-hand limit of $G_{s,t}$ at $r_{\mu_t}$, we have $$G_{s,t}(x)\leq G_{s,t}\left(r_{\mu_t}^-\right)=G_{s,t}(r_{\mu_t})\leq G_{s,t}(y).$$
Now, since $z\longmapsto K_{\mu}(s,z)$ is non-increasing, $G_{s,t}$ is non-decreasing on $[r_{\mu_t},+\infty[$. Moreover, $G_{s,t}$ is left-differentiable and its left-derivative  $G'_{s,t}$ satisfies:
\[
\forall\,z<r_{\mu_t},\text{ }K^2_\mu(s,z)G'_{s,t}(z)=\mu_s([z,+\infty[)\mu_t([z,+\infty[)\left(\Psi^{wds}_{\mu_t}(z)-\Psi^{wds}_{\mu_s}(z)\right)\geq0,
\]
where $\mu_s([z,+\infty[)>0$ and $\mu_t([z,+\infty[)>0$ since $z<r_{\mu_t}\leq r_{\mu_s}$. Hence, as $G_{s,t}$ is continuous, we deduce that $G_{s,t}$ is still non-decreasing on $]-\infty,r_{\mu_t}[$.

Conversely, suppose that $K_\mu$ is TP$_2$ on $\R_+\times\R$. Let $0\leq s\leq t$ and $x\in\R$ be given. We shall prove that 
\begin{equation}\label{eq:starMRLpr3}
\Psi^{wds}_{\mu_s}(x)\leq\Psi^{wds}_{\mu_t}(x).
\end{equation}
If $x\geq r_{\mu_t}$, then (\ref{eq:starMRLpr3}) is immediate. Now, suppose that $x<r_{\mu_t}$. Since $K_\mu$ is TP$_2$, then (\ref{eq:starMRLpr0}) holds in particular for $y\geq r_{\mu_t}$ which implies that
\[
K_\mu(s,x)\geq-\frac{K_\mu(t,x)}{m_{\mu_t}}K_\mu(s,y)>K_\mu(s,y)\geq-m_{\mu_s}
\] 
which in turn implies that $x<r_{\mu_s}$. The TP$_2$ property of $K_\mu$ implies also that the function $G_{s,t}$ is non-decreasing on $]-\infty,x]$. In particular, the left-derivative $G'_{s,t}$ of $G_{s,t}$ at $x$ is nonnegative. Therefore, we have:
\[
0\leq K^2_\mu(s,x)G'_{s,t}(x)=\mu_s([x,+\infty[)\mu_t([x,+\infty[)\left(\Psi^{wds}_{\mu_t}(x)-\Psi^{wds}_{\mu_s}(x)\right)
\]
which ends the proof.
\end{proof}

It is proved in Shaked-Shanthikumar \cite[Theorem 4.A.26]{ShS} that the MRL ordering implies the increasing convex ordering. In the next result, we show that the WDS ordering implies the decreasing convex ordering.
\begin{theorem}\label{theo:starMRLIncv}
Every WDS ordered process $(X_t,t\geq0)$ with negative mean is ordered by the decreasing convex order. 
\end{theorem}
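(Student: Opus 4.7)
The strategy is to reduce the decreasing convex order to pointwise monotonicity in $t$ of the function $K_\mu$ introduced in (\ref{eq:starMRLfunct}), and then to deduce the latter from the TP$_2$ property obtained in Theorem \ref{theo:WDSTPTail}.

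First, I would use the standard characterization of the decreasing convex order: $(X_t,t\geq0)$ is non-decreasing in this order if and only if, for every $a\in\R$, the map $t\longmapsto \E[(a-X_t)^+]$ is non-decreasing (the functions $(a-\cdot)^+$ generate the cone of decreasing convex functions together with constants). From the identity $(a-y)^+ = (y-a)^+ + a - y$ one gets
$$\E[(a-X_t)^+] = C_\mu(t,a) - m_{\mu_t} + a = K_\mu(t,a) + a,$$
so it suffices to prove that for every fixed $a\in\R$ and every $0\leq s\leq t$, $K_\mu(s,a)\leq K_\mu(t,a)$.

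Next, I would invoke Theorem \ref{theo:WDSTPTail} to obtain that $K_\mu$ is TP$_2$ on $\R_+\times\R$. Since $m_{\mu_t}<0$ and $C_\mu(t,\cdot)\geq0$, $K_\mu$ is moreover strictly positive, so dividing in the TP$_2$ inequality yields, for all $x\leq a$,
$$\frac{K_\mu(t,a)}{K_\mu(s,a)} \geq \frac{K_\mu(t,x)}{K_\mu(s,x)}.$$

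Finally, I would let $x\to-\infty$. Writing $C_\mu(t,x) = m_{\mu_t} - x + \E[(x-X_t)^+]$ and noting that $\E[(x-X_t)^+]\to 0$ as $x\to-\infty$ by dominated convergence, we obtain $K_\mu(t,x)\sim -x$, and the same asymptotic for $K_\mu(s,x)$; hence the right-hand side above tends to $1$, and so $K_\mu(t,a)\geq K_\mu(s,a)$ for every $a\in\R$. I do not expect any serious obstacle here: the one delicate point is to extract the correct asymptotic normalization of $K_\mu$ near $-\infty$, and this is granted by the integrability of $\mu_t$ together with the strict positivity afforded by the hypothesis $m_{\mu_t}<0$.
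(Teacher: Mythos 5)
Your proposal is correct and follows essentially the same route as the paper: reduce the decreasing convex order to the pointwise inequality $K_\mu(s,x)\leq K_\mu(t,x)$ via $K_\mu(v,x)+x=\E\left[(x-X_v)^+\right]$, use the TP$_2$ property of $K_\mu$ from Theorem \ref{theo:WDSTPTail} to get monotonicity of the ratio $z\longmapsto K_\mu(t,z)/K_\mu(s,z)$, and let $z\to-\infty$ where the ratio tends to $1$. Your explicit asymptotic $K_\mu(t,z)\sim -z$ is exactly what the paper extracts from Point iii) of Proposition \ref{prop:DefCallFunct}, so the two arguments coincide.
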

\begin{proof}
Let $\mu:=(\mu_t,t\geq0)$ denote the family of one-dimensional marginals of $(X_t,t\geq0)$. Let $x\in\R$ and $0\leq s<t$ be fixed. Since, for every $v\geq0$, $K_\mu(v,x)+x=\E\left[(x-X_v)^+\right]$, then it suffices to prove that
\[
K_\mu(s,x)\leq K_\mu(t,x).
\]
We deduce from Point iii) of Proposition \ref{prop:DefCallFunct} that
\begin{equation}\label{eq:CallFunctAsympt}
\lim\limits_{z\to-\infty}\frac{K_\mu(s,z)}{z}=-1=\lim\limits_{z\to-\infty}\frac{K_\mu(t,z)}{z}.
\end{equation}
On the other hand, the TP$_2$ property of $K_\mu$ yields that
\begin{equation}\label{eq:starCallTP2}
\forall\,z\leq x,\text{ }\frac{K_\mu(t,z)}{K_\mu(s,z)}\leq\frac{K_\mu(t,x)}{K_\mu(s,x)}.
\end{equation}
As a consequence of (\ref{eq:CallFunctAsympt}) and (\ref{eq:starCallTP2}), we have:
\[
1=\lim\limits_{z\to-\infty}\frac{K_\mu(t,z)}{K_\mu(s,z)}\leq\frac{K_\mu(t,x)}{K_\mu(s,x)}
\]
which completes the proof.
\end{proof}
\begin{remark}
It follows from Theorem \ref{theo:starMRLIncv} and from the Kellerer's theorem that every  WDS ordered process with negative mean has the same one-dimensional marginals as a Markovian supermartingale. 
\end{remark}
Now, we wish to prove that the implication of Theorem \ref{theo:starMRLIncv} is strict. For this end, we should provide a process which increases in the decreasing convex order and which is not WDS ordered. The following result states that non trivial peacock processes do not increase in the WDS order. Precisely, WDS ordered processes with constant mean are stochastically constant.  
\begin{theorem}\label{theo:wdspcoc}
Let $(X_t,t\geq0)$ be an integrable WDS process such that, for every $t\geq0$, $X_t$ has a negative mean. If $\E[X_t]$ does not depend on $t$, then, for every $0\leq s\leq t$, $X_s$ and $X_t$ have the same distribution.
\end{theorem}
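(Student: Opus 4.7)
The plan is to establish that the integrated survival function $C_\mu(t,x) := \E[(X_t - x)^+]$ is independent of $t$; by Proposition \ref{prop:DefCallFunct} this forces $\mu_s = \mu_t$. Writing $m = \E[X_t] < 0$ (constant by assumption), I will sandwich $C_\mu(s,\cdot)$ and $C_\mu(t,\cdot)$ using, on one side, the decreasing convex order, and on the other, the TP$_2$ characterisation of the WDS order.

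For the ``$\leq$'' inequality, I appeal to Theorem \ref{theo:starMRLIncv}: the family $(\mu_t)$ is non-decreasing in the decreasing convex order, so for every $a\in\R$ the map $t\mapsto\E[(a-X_t)^+]$ is non-decreasing. Put-call parity $(a-x)^+ = (x-a)^+ - x + a$ yields $\E[(a-X_t)^+] = C_\mu(t,a) - m + a$, and since $m$ does not depend on $t$ we obtain $C_\mu(s,x) \leq C_\mu(t,x)$ pointwise in $x$ for $s\leq t$.

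For the reverse inequality, I would invoke Theorem \ref{theo:WDSTPTail}: the function $K_\mu(t,x) = C_\mu(t,x) - m$ is TP$_2$ on $\R_+\times\R$. Since each $\mu_t$ is integrable, $C_\mu(t,y)\to 0$ as $y\to+\infty$, hence $K_\mu(s,y)$ and $K_\mu(t,y)$ both tend to $-m>0$. Passing to the limit $y\to+\infty$ in the pointwise TP$_2$ inequality
\[
K_\mu(s,x)\,K_\mu(t,y)\geq K_\mu(s,y)\,K_\mu(t,x)
\]
gives $(-m)\,K_\mu(s,x)\geq(-m)\,K_\mu(t,x)$, and dividing by $-m>0$ produces $C_\mu(s,x)\geq C_\mu(t,x)$. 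Combining both inequalities yields $C_\mu(s,\cdot)=C_\mu(t,\cdot)$, so Proposition \ref{prop:DefCallFunct} concludes $\mu_s=\mu_t$.

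The argument is short once the two ingredients are in place. The one conceptual point to appreciate — rather than a true obstacle — is that the shared asymptotic value $-m$ of $K_\mu(\cdot,y)$ at $+\infty$ collapses the TP$_2$ rigidity into the reverse direction of the decreasing convex order, so that constancy of the mean leaves no room for any genuine $t$-dependence of the marginal laws.
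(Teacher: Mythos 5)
Your proof is correct and follows essentially the same route as the paper: the paper also combines Proposition \ref{prop:DefCallFunct}, the TP$_2$ characterisation of Theorem \ref{theo:WDSTPTail} and the decreasing convex order of Theorem \ref{theo:starMRLIncv}, sandwiching the ratio $K_\mu(t,\cdot)/K_\mu(s,\cdot)$ between its limits $1$ at $-\infty$ and $+\infty$, which is exactly your two inequalities $C_\mu(s,x)\leq C_\mu(t,x)$ and $C_\mu(s,x)\geq C_\mu(t,x)$ phrased in terms of the ratio. Nothing is missing.
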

\begin{proof}
Let $\mu_t$ denote the law of $X_t$. Fix $0\leq s\leq t$ and $x\in\R$. Then combining Point ii) of Proposition \ref{prop:DefCallFunct}, Theorem \ref{theo:WDSTPTail} and Theorem \ref{theo:starMRLIncv}, we obtain
$$
1=\lim\limits_{y\to-\infty}\frac{K_\mu(t,y)}{K_\mu(s,y)}\leq\frac{K_\mu(t,x)}{K_\mu(s,x)}\leq\lim\limits_{z\to+\infty}\frac{K_\mu(t,z)}{K_\mu(s,z)}=1
$$
which yields the desired result.
\end{proof}
\begin{remark}
The preceding property of the WDS ordering has already been proved for the usual stochastic ordering (see e.g. Shaked-Shanthikumar \cite[Theorem 1.A.8]{ShS}).
\end{remark}
Here are some closure properties of the WSD ordering which are useful to exhibit several other WDS ordered processes.

\subsection{Preservation properties of the WDS ordering}
The preservation properties listed below are satisfied by the MRL ordering. They follow directly from well-known facts related to total positivity of order 2. 
\subsubsection{Subordination}
Let $(Y_{\lambda},\lambda\geq0)$ be a WDS process. Let $(\Lambda_t,t\geq0)$ be an homogeneous and right-continuous $\R_+$-valued Markov process independent of $(Y_\lambda,\lambda\geq0)$. We suppose that $\Lambda_t$ has density $p_t$ and that the function $(t,\lambda)\longmapsto p_t(\lambda)$ is TP$_2$ on $\R_+^{\ast}\times\R_+$, where $\R_+^{\ast}$ stands for the set of positive real numbers. Some examples of such processes are given in Karlin \cite[Theorems 4.3(i) and 5.2]{KA}. We suppose that, for every $t\geq0$, $\E\left[\left|Y_{\Lambda_t}\right|\right]<\infty$, and we consider the process $\left(X_t:=Y_{\Lambda_t},t\geq0\right)$. Let $K_X$ and $K_Y$ denote the fuctions defined by $K_X(t,x)=\E\left[(X_t-x)^+\right]-\E[X_t]$ and $K_Y(\lambda,x)=\E\left[(Y_\lambda-x)^+\right]-\E[Y_\lambda]$ respectively.
We have
$$
K_X(t,x)=\int_{\R_+}K_Y(\lambda,x)p_t(\lambda)\,d\lambda
$$
which, by Proposition \ref{prop:Cauchy-BinetTP2}, implies that  $\left(X_t:=Y_{\Lambda_t},t\geq0\right)$ is WDS ordered.

\subsubsection{Random translation}\label{sssec:RandTranslate}
Let $(X_t,t\geq0)$ be an integrable WDS process with negative mean and let  
$Y$ be an integrable centered log-concave random variable independent of $(X_t,t\geq0)$.
Then $(Z_t:=X_t+Y,t\geq0)$ is still a WDS ordered process. Indeed,
 if we define the functions $K_X$, resp. $K_Z$ by $K_X(t,x)=C_X(t,x)-m_t$, resp. $K_Z(t,x)=C_Z(t,x)-m_t$, where $C_X(t,\cdot)$, resp. $C_Z(t,\cdot)$ denotes the integrated survival function of $X_t$, resp. $Z_t$ and where $m_t$ is the mean of $X_t$  and if we denote by  $f_Y$ the density of $Y$, then
$$
K_Z(t,x)=\int_{\R}K_X(t,x-y)f_Y(y)dy=\int_{\R}K_X(t,z)f_Y(x-z)dz.
$$
It follows from Point 1. of Example \ref{exa:ExaTP2Funct} and from Proposition \ref{prop:Cauchy-BinetTP2}, that $K_Z$ is TP$_2$. 

Note that the WDS ordering is not stable by deterministic translations: let $(\mu_t,t>0)$ be the family of distributions given by  
$$
\forall\,t>0,\quad\mu_t=\frac{1}{1+t}\delta_{-t}+\frac{t}{1+t}\delta_{1-t},
$$
where $\delta_{-t}$ and $\delta_{1-t}$ are the Dirac measures at points $-t$ and $1-t$ respectively. Then $(\mu_t,t>0)$ is non-decreasing in the WDS order. On the contrary, denoting by $\widehat{\mu}_t$ the image of $\mu_t$ under $y\longmapsto y-1$, the process $\left(\widehat{\mu}_t,t>0\right)$ is not WDS ordered.
\subsubsection{Scale mixtures}
Let  $(X_t,t\geq0)$ be an integrable WDS ordered process with negative mean and let
 $Y$ be an integrable $\R_+$-valued random variable independent of $(X_t,t\geq0)$ which admits a positive $\mathcal{C}^1$-class density $f$. We suppose that $\log Y$ is log-concave. We consider the the process  $(Z_t:=YX_t,t\geq0)$. Let $K_Z$ and $K_X$ be defined as in Paragraph 3.3.2. One has:
$$
\forall\,(t,x)\in\R_+\times\R,\quad K_Z(t,x)=\int_0^{\infty}K_X\left(t,\frac{x}{y}\right)yf(y)dy,
$$
and, making the change of variable $zy=x$ for a fixed $x\in\R^\ast$, 
$$
K_Z(t,x)=\left\{
\begin{array}{ll}
\displaystyle\int_{0}^{\infty}K_X(t,z)f\left(\dfrac{x}{z}\right)  \dfrac{x^2dz}{z^3} &\text{if }x>0,\\
-\displaystyle\int_{-\infty}^{0}K_X(t,z)f\left(\dfrac{x}{z}\right)  \dfrac{x^2dz}{z^3}&\text{if }x<0.
\end{array}
\right.
$$
Since $\log Y$ is log-concave, $(x,z)\longmapsto f(x/z)$ is TP$_2$ on $\R_+^\ast\times\R_+^\ast$ and, as a consequence, TP$_2$ on  $\R_-^\ast\times\R_-^\ast$, where $\R_-^\ast$ denotes the set of negative real numbers. Then, by Proposition \ref{prop:Cauchy-BinetTP2}, $K_Z$ is TP$_2$ on $\R_+\times\R_+^\ast$  and on $\R_+\times\R_-^\ast$. Since $K_Z$ is continuous, we deduce that $K_Z$ is still TP$_2$ on $\R_+\times\R$.
Then $(Z_t:=YX_t,t\geq0)$ is a WDS ordered process.  
 
\subsubsection{Convex combinations}
Let $r$ be a positive integer and let $(\mu_t,t\geq0)$ be a WDS ordered family of probability measures. Let $\tau=(\tau_0,\tau_1,\cdots,\tau_r)$ be an increasing sequence of nonnegative real numbers. Consider the family $(\mu^{(\tau)}_t,t\in\R_+)$ given by: 
$$
\mu^{(\tau)}_t=\left\{
\begin{array}{cc}
\mu_t&\text{if }t\notin[\tau_0,\tau_r]\\
\dfrac{\tau_{1}-t}{\tau_{1}-\tau_0}\mu_{\tau_0}+\dfrac{t-\tau_0}{\tau_{1}-\tau_{0}}\mu_{\tau_1}&\text{if }t\in[\tau_0,\tau_1]\\
\cdots\cdots\cdots&\cdots\\
\dfrac{\tau_{r}-t}{\tau_{r}-\tau_{r-1}}\mu_{\tau_{r-1}}+\dfrac{t-\tau_{r-1}}{\tau_{r}-\tau_{r-1}}\mu_{\tau_r}&\text{if }t\in[\tau_{r-1},\tau_r].
\end{array}
\right.
$$
If $K$ and $K^{(\tau)}$ are the functions defined on $\R_+\times\R$ by $K(t,x)=\displaystyle\int[(y-x)^+-y]\mu_t(dy)$ and $K^{(\tau)}(t,x)=\displaystyle\int[(y-x)^+-y]\mu^{(\tau)}_t(dy)$ respectively, then, for every $x\in\R$
$$
K^{(\tau)}(t,x)=\left\{
\begin{array}{cc}
K(t,x)&\text{if }t\notin[\tau_0,\tau_r]\\
\dfrac{\tau_{1}-t}{\tau_{1}-\tau_0}K(\tau_0,x)+\dfrac{t-\tau_0}{\tau_{1}-\tau_{0}}K(\tau_1,x)&\text{if }t\in[\tau_0,\tau_1]\\
\cdots\cdots\cdots&\cdots\\
\dfrac{\tau_{r}-t}{\tau_{r}-\tau_{r-1}}K(\tau_{r-1},x)+\dfrac{t-\tau_{r-1}}{\tau_{r}-\tau_{r-1}}K(\tau_r,x)&\text{if }t\in[\tau_{r-1},\tau_r].
\end{array}
\right.
$$
Since $(\mu_t,t\geq0)$ is increasing in the WDS order, $K$ is TP$_2$ and, by Proposition \ref{prop:InterPolTP2}, $K^{(\tau)}$ is also TP$_2$.  
\subsubsection{Censoring type transformations }
Let $(\mu_t,t\geq0)$ be a WDS family of integrable probability measures. For every $t\geq0$, we denote the mean of $\mu_t$ by $m_t$.  
Let $a<b$ be fixed real numbers. We consider the family $(\mu^{a,b}_t,t\geq0)$ given by:
\begin{equation}\label{eq:CensorTr}
\mu^{a,b}_t(dy)=(1_{]-\infty,a[}+1_{]b,+\infty[})(y)\mu_t(dy)+\alpha^{a,b}_t\delta_a(dy)+\beta^{a,b}_t\delta_b(dy),
\end{equation}
where $\delta_a$, resp. $\delta_b$ denotes the Dirac measure at point $a$, resp. point $b$, and where
$$
\alpha^{a,b}_t=\frac{1}{b-a}\int_{[a,b]}(b-y)\mu_t(dy)\,\text{ and }\,\beta^{a,b}_t=\frac{1}{b-a}\int_{[a,b]}(y-a)\mu_t(dy).
$$
Let $K$ and $K^{a,b}$ be the functions defined on $\R_+\times\R$ by $K(t,x)=\displaystyle\int[(y-x)^+-y]\mu_t(dy)$ and $K^{a,b}(t,x)=\displaystyle\int[(y-x)^+-y]\mu^{a,b}_t(dy)$ respectively. Then, for every $t\in\R_+$,
$$
K^{a,b}(t,x)=\left\{
\begin{array}{cc}
K(t,x)&\text{if }x\notin[a,b],\\
\dfrac{b-x}{b-a}K(t,a)+\dfrac{x-a}{b-a}K(t,b)&\text{otherwise.}
\end{array}
\right.
$$
We deduce from the WDS ordering property of $(\mu_t,t\geq0)$ and from Proposition \ref{prop:InterPolTP2} that $K^{a,b}$ is TP$_2$. Therefore, according to Theorem \ref{theo:WDSTPTail}, $\left(\mu_t^{a,b},t\geq0\right)$
is non-decreasing in the WDS order.

We turn now to the exposition of some examples of WDS ordered processes.

\section{Some examples of WDS processes}
 We first prove that stochastically non-increasing processes with negative mean are WDS ordered, and then we give some examples of WDS ordered processes which do not decrease stochastically.
\subsection{Stochastically non-increasing processes with negative mean}
The following result states that every stochastically non-increasing process with negative mean increases in the WDS order.  
\begin{theorem}\label{theo:WdsoExa}
Let $(\mu_t,t\geq0)$ be a family of integrable probability measures which have a negative mean and which decreases stochastically. Then $(\mu_t,t\geq0)$ increases in the WDS order.
\end{theorem}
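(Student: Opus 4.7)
The plan is to verify the WDS inequality $\Psi^{wds}_{\mu_s}(x) \le \Psi^{wds}_{\mu_t}(x)$ pointwise for $s\le t$, rather than going through the TP$_2$ characterization of Theorem~\ref{theo:WDSTPTail}; the whole argument reduces to rewriting $K_\mu(t,x)$ in a form that makes its monotonicity in $t$ transparent.

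The first step is to establish the ``put--call parity'' type identity
\[
K_\mu(t,x) = \int_{\R}(x-y)^+\,\mu_t(dy) - x,
\]
which follows from $\int(y-x)^+\mu_t(dy) - \int(x-y)^+\mu_t(dy) = m_{\mu_t} - x$ combined with the definition $K_\mu(t,x) = C_\mu(t,x) - m_{\mu_t}$. This is the crucial rewriting: up to the additive constant $-x$, $K_\mu(t,x)$ is the $\mu_t$-expectation of the non-increasing function $y\mapsto (x-y)^+$.

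Next I would read off three consequences of the hypothesis that $(\mu_t,t\ge 0)$ decreases stochastically, i.e.\ $\int\phi\,d\mu_s\ge\int\phi\,d\mu_t$ for $s\le t$ and non-decreasing $\phi$: (a) testing against the non-decreasing indicator $\mathbf{1}_{[x,+\infty[}$ gives $\overline{F}_{\mu_t}(x) := \mu_t([x,+\infty[)$ non-increasing in $t$; (b) testing against the non-increasing function $y\mapsto(x-y)^+$ yields $\int(x-y)^+\mu_t(dy)$ non-decreasing in $t$, hence by the previous identity $K_\mu(t,x)$ itself is non-decreasing in $t$; (c) from (a) one also deduces $r_{\mu_t}\le r_{\mu_s}$ since $\overline{F}_{\mu_s}(z)=0$ forces $\overline{F}_{\mu_t}(z)=0$.

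The conclusion is then immediate. For $x\ge r_{\mu_t}$ the right-hand side $\Psi^{wds}_{\mu_t}(x)=+\infty$ and the inequality is trivial. For $x< r_{\mu_t}\le r_{\mu_s}$ both factors in
\[
\Psi^{wds}_{\mu_t}(x) = \frac{K_\mu(t,x)}{\overline{F}_{\mu_t}(x)}
\]
are strictly positive (the numerator since $K_\mu(t,x)\ge -m_{\mu_t}>0$, the denominator by definition of $r_{\mu_t}$), so by (a) and (b) the ratio is a non-decreasing positive function of $t$ divided by a non-increasing positive function of $t$, hence non-decreasing in $t$. I do not expect any real obstacle: the whole argument hinges on locating the right expression for $K_\mu$, after which the monotonicity in $t$ is a direct consequence of two invocations of stochastic monotonicity.
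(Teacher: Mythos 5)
Your argument is correct and is essentially the paper's own proof: the paper likewise rewrites $\Psi^{wds}_{\mu_t}(x)$ via put--call parity as $x+\bigl(\int(x-y)^+\mu_t(dy)-x\bigr)/\mu_t([x,+\infty[)$ and concludes from the two monotonicity consequences of stochastic decrease (numerator non-decreasing and positive, denominator non-increasing and positive, plus $r_{\mu_t}\leq r_{\mu_s}$ for the trivial case). Note only the harmless slip that the correct identity is $\Psi^{wds}_{\mu_t}(x)=x+K_\mu(t,x)/\mu_t([x,+\infty[)$ rather than $K_\mu(t,x)/\mu_t([x,+\infty[)$; since the additive term $x$ does not depend on $t$, your monotonicity conclusion is unaffected.
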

\begin{proof}
Let $(\mu_v,v\geq0)$ be a family of integrable probability measures having  negative means which is stochastically non-increasing. Let $0\leq s\leq t$ and $x\in\R$ be fixed. Since $(\mu_v,v\geq0)$ decreases stochastically, we have $r_{\mu_t}\leq r_{\mu_s}$. If $x\geq r_{\mu_t}$, then $\Psi_{\mu_t}^{wds}(x)=+\infty\geq\Psi_{\mu_s}^{wds}(x)$. Otherwise, we have:
\begin{align*}
\Psi^{wds}_{\mu_t}(x)&=x+\frac{\displaystyle\int(y-x)1_{[x,+\infty[}(y)\mu_t(dy)-\displaystyle\int_{}y\mu_t(dy)}{\displaystyle\int1_{[x,+\infty[}(y)\mu_t(dy)}\\
&=x+\frac{\displaystyle\int(x-y)1_{]-\infty,x]}(y)\mu_t(dy)-x}{\displaystyle\int1_{[x,+\infty[}(y)\mu_t(dy)}.
\end{align*}
Since $(\mu_v,v\geq0)$ is stochastically non-increasing, 
$$
\int(x-y)1_{]-\infty,x]}(y)\mu_t(dy)-x\geq\int(x-y)1_{]-\infty,x]}(y)\mu_s(dy)-x>0 
$$
and
$$
0< \int1_{[x,+\infty[}(y)\mu_t(dy)\leq\int1_{[x,+\infty[}(y)\mu_s(dy).
$$
We then deduce that $\Psi^{wds}_{\mu_t}(x)\geq \Psi^{wds}_{\mu_s}(x)$.
\end{proof}  
\begin{remark}
We deduce from the preceding result that the Cox-Hobson algorithm provides a way to associate a Markovian process to a given stochastically non-decreasing family of distributions with densities. Indeed, let $(\mu_t,t\geq0)$ be a stochastically non-decreasing family of distributions with densities; if $m_0$ is a real number such that $m_0<\int y\mu_0(dy)$ and if $\mu_t^{g}$ denotes the image of $\mu_t$ under $g:y\longmapsto m_0-y$, then $(\mu^{g}_t,t\geq0)$ is a stochastically non-increasing family of distributions with negative means; let $(B_t,t\geq0)$ be a standard Brownian motion started at $0$, and let $T^{g}_t$ be the Cox-Hobson stopping time attached to $\mu^{g}_t$; then $\left(B_{T^{g}_t},t\geq0\right)$ is a Markovian supermartingale associated to $\left(\mu^{g}_t,t\geq0\right)$; we deduce that $\left(m_0-B_{T^{g}_t},t\geq0\right)$ is a Markovian submartingale associated to $(\mu_t,t\geq0)$. Note that the quantile process associated to a stochastically non-decreasing process is not necessarily Markovian. Recently, Juillet \cite[Proposition 4.4]{Ju} characterized Markovian quantile processes. 
\end{remark} 

\subsection{Some examples of WDS processes which do not decrease stochastically}
Since Theorem \ref{theo:WdsoExa} states that, for processes with negative mean, the decreasing stochastic order implies the WDS order, one may ask whether the reverse implication holds. We give a negative answer to this statement by providing two families of WDS processes  which do not decrease stochastically. We start by the following discrete example. 
\begin{prop}
Let $k\geq1$ and let $(\mu_t,t\in]0,1[)$ be the family of probability measures given by:
$$
\mu_t(dy)=(1-t^{k}+t^{k+1})\delta_{-t^k}(dy)+(t^k-t^{k+1})\delta_{1-t^k}(dy),
$$
where, for every $a\in\R$, $\delta_a$ denotes the Dirac measure at point $a$. Then $(\mu_t,t\in]0,1[)$ is a WDS ordered process which does not decrease stochastically.
\end{prop}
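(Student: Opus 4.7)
The plan is to verify directly, by explicit computation, that $(\mu_t,t\in{]0,1[})$ satisfies the WDS definition but fails to be stochastically non-increasing. Since each $\mu_t$ is supported on only two atoms depending on $t$, every relevant quantity admits a closed-form expression, and the whole argument reduces to elementary calculus.

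First I would record the basic data: a short calculation gives total mass $1$, mean $m_{\mu_t}=-t^{k+1}<0$, and right-support endpoint $r_{\mu_t}=1-t^k$. To show that $(\mu_t)$ is not stochastically non-increasing, I would take $x=0$ and observe that $\mu_t([0,+\infty[)=t^k-t^{k+1}=t^k(1-t)$ vanishes at both endpoints of $]0,1[$ and is strictly positive in between, so it cannot be non-increasing in $t$.

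The heart of the argument is the WDS monotonicity. Computing $\Psi^{wds}_{\mu_t}$ piecewise from the definition, I obtain $\Psi^{wds}_{\mu_t}(x)=0$ on ${]-\infty,-t^k]}$, $\Psi^{wds}_{\mu_t}(x)=1-t^k+\tfrac{t}{1-t}$ on $]-t^k,1-t^k[$, and $\Psi^{wds}_{\mu_t}(x)=+\infty$ on $[1-t^k,+\infty[$. For each fixed $x$, as $t$ grows on $]0,1[$ both moving boundaries $-t^k$ and $1-t^k$ strictly decrease, so $x$ crosses each of them at most once; and from the explicit formulas each such crossing produces an upward jump in $\Psi^{wds}_{\mu_t}(x)$ (from $0$ to a positive value, or from a finite value to $+\infty$). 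Hence the only non-trivial step is to show that the middle-regime value $f(t):=1-t^k+\tfrac{t}{1-t}$ is non-decreasing on $]0,1[$.

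The main obstacle is precisely this last monotonicity. Differentiating gives $f'(t)=-kt^{k-1}+(1-t)^{-2}$, so the claim reduces to the inequality $kt^{k-1}(1-t)^2\leq1$ for all $t\in{]0,1[}$ and all integers $k\geq1$. The standard one-variable maximisation (equivalently, weighted AM--GM) yields $\max_{[0,1]}t^{k-1}(1-t)^2=\tfrac{4(k-1)^{k-1}}{(k+1)^{k+1}}$ with the convention $0^0=1$, so it suffices to prove the purely arithmetic inequality $4k(k-1)^{k-1}\leq(k+1)^{k+1}$. I would factor $(k+1)^{k+1}=(k+1)^2\cdot(k+1)^{k-1}$ and observe that $(k+1)^2\geq4k$ (equivalent to $(k-1)^2\geq0$) and $(k+1)^{k-1}\geq(k-1)^{k-1}$, hence $(k+1)^{k+1}\geq4k(k-1)^{k-1}$, which closes the proof.
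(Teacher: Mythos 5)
Your proof is correct and follows essentially the same route as the paper: you compute $\Psi^{wds}_{\mu_t}$ piecewise and obtain exactly the paper's formula ($0$ on $]-\infty,-t^k]$, $1-t^k+\tfrac{t}{1-t}$ on $]-t^k,1-t^k[$, $+\infty$ beyond), and you rule out stochastic decrease from $\mu_t([0,+\infty[)=t^k(1-t)$, just as the paper does. The only difference is that the paper stops at the piecewise formula and simply asserts pointwise monotonicity in $t$, whereas you explicitly verify the genuinely non-obvious step --- that $t\longmapsto 1-t^k+\tfrac{t}{1-t}$ is non-decreasing, via $kt^{k-1}(1-t)^2\leq1$, the maximisation at $t=\tfrac{k-1}{k+1}$ and the inequality $4k(k-1)^{k-1}\leq(k+1)^{k+1}$ --- together with the crossing-of-boundaries bookkeeping; this fills a real gap in the paper's write-up (the middle value is a sum of a decreasing and an increasing function, so its monotonicity needs an argument), and your argument moreover works for all real $k\geq1$, not just integers.
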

\begin{proof}
Let $k\geq1$ and $t\in]0,1[$. Then, for every $x\in\R$,
$$
\mu_t([x,+\infty[)=\left\{
\begin{array}{cl}
1&\text{ if }x\leq-t^k,\\
t^k-t^{k+1}&\text{ if }-t^k<x\leq1-t^k,\\
0&\text{ if }x>1-t^k.
\end{array}
\right.
$$
In particular, $\mu_t([0,+\infty[)=t^k-t^{k+1}$ which proves that $(\mu_t,t\in]0,1[)$ does not decrease stochastically. Moreover,
$$
\int_{[x,+\infty[}y\mu_t(dy)=\left\{
\begin{array}{cl}
-t^{k+1}&\text{ if }x\leq-t^k,\\
(t^k-t^{k+1})(1-t^k)&\text{ if }-t^k<x\leq1-t^k,\\
0&\text{ if }x>1-t^k.
\end{array}
\right.
$$
Hence, for every $x\in\R$,
$$
\Psi^{wds}_{\mu_t}(x)=\left\{
\begin{array}{cl}
0&\text{ if }x\leq-t^k,\\
1-t^k+\dfrac{t}{1-t}&\text{ if }-t^k<x<1-t^k,\\
+\infty&\text{ if }x\geq1-t^k
\end{array}
\right.
$$
which shows that the family $\left(\Psi^{wds}_{\mu_t},t\in]0,1[\right)$ is pointwise non-decreasing. 
\end{proof}
The next result gives another interesting family of non stochastically decreasing WDS ordered processes
\begin{prop} 
For every $k>-1$, the family $\left(\mu_t,t\in\left]1/2,2\right[\right)$ of probability measures given by
\[
\mu_t(dy)=\frac{2-t}{2\alpha(t)}1_{[-\alpha(t),0[}(y)dy+\frac{(k+1)t^{k+2}}{2}y^k1_{[0,1/t[}(y)dy,
\]
where
\[
\forall\,t\in\left]1/2,2\right[,\text{ }
\alpha(t)=\frac{k+1}{k+2}\frac{(2t)^{k+2}(2t+1)}{2-t}
\]
is a WDS ordered family which does not decrease  stochastically.
\end{prop}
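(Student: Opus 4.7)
The plan is to verify the two assertions independently. The failure of stochastic non-increase follows from a one-line observation, while WDS ordering is established by computing $\Psi^{wds}_{\mu_t}(x)$ explicitly and checking its monotonicity in $t$ at each fixed $x$.

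First I would record the preliminaries. A direct computation shows that $\mu_t$ is a probability measure (the uniform part carries mass $(2-t)/2$, the power-density part carries $t/2$), with $r_{\mu_t}=1/t$ and mean
\[
m_{\mu_t}=-\frac{(2-t)\alpha(t)}{4}+\frac{k+1}{2(k+2)}=-\frac{k+1}{4(k+2)}\bigl[(2t)^{k+2}(2t+1)-2\bigr],
\]
which is strictly negative on $]1/2,2[$ since the bracket vanishes at $t=1/2$ and is strictly increasing in $t$. A logarithmic differentiation of $\alpha(t)=\tfrac{k+1}{k+2}(2t)^{k+2}(2t+1)/(2-t)$ also yields $\alpha'(t)>0$ on $]1/2,2[$. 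The failure of stochastic non-increase is immediate from $\mu_t([0,+\infty[)=t/2$, which is strictly increasing in $t$.

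Next I would compute $\Psi^{wds}_{\mu_t}(x)$ piecewise. Substituting the expression for $m_{\mu_t}$ into the definition yields
\[
\Psi^{wds}_{\mu_t}(x)=\begin{cases}
0 & \text{if } x\le-\alpha(t),\\[2pt]
\dfrac{(2-t)\bigl(\alpha(t)^2-x^2\bigr)}{2\bigl(t\alpha(t)-(2-t)x\bigr)} & \text{if } -\alpha(t)<x<0,\\[4pt]
\dfrac{(k+1)t^{k+1}\bigl[2^{k+1}(2t+1)-x^{k+2}\bigr]}{(k+2)\bigl(1-(tx)^{k+1}\bigr)} & \text{if } 0\le x<1/t,\\[4pt]
+\infty & \text{if } x\ge 1/t,
\end{cases}
\]
with matching values at $x=-\alpha(t)$ and $x=0$ ensuring continuity of $\Psi^{wds}_{\mu_t}$ on $]-\infty,1/t[$.

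Finally I would verify pointwise monotonicity in $t$ for each fixed $x\in\R$. For $x\ge 0$ with $x<1/t$, I would write the relevant expression as $N(t)/D(t)$ with $N(t)=t^{k+1}[2^{k+1}(2t+1)-x^{k+2}]$ and $D(t)=1-(tx)^{k+1}$; then $D>0$, $D'<0$, and a short computation gives
\[
N'(t)=t^k\bigl\{(k+2)2^{k+2}t+(k+1)\bigl[2^{k+1}-x^{k+2}\bigr]\bigr\}>0,
\]
with positivity checked at the worst point $t\to 1/2^+$ using $(k+1)x^{k+2}<(k+1)/t^{k+2}$. This gives $(N/D)'\ge 0$, and at $t=1/x$ the value jumps to $+\infty$ and stays there. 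For $x<0$, introduce the unique $t^\ast(x)\in{}]1/2,2[$ with $\alpha(t^\ast)=-x$ (setting $t^\ast=1/2$ if no such point exists); the function is $0$ on $]1/2,t^\ast[$, so monotonicity is trivial there, and on $[t^\ast,2[$ one must show that the middle formula is non-decreasing in $t$. The hard part will be this last sub-case: since $(2-t)$ decreases while $\alpha(t)$ increases, the sign of the derivative is not immediate, and extracting it requires a careful algebraic manipulation exploiting the explicit form of $\alpha$ and the bound $\alpha(t)\ge-x$.
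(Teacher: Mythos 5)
Your preliminary computations (masses, mean, monotonicity of $\alpha$), the explicit piecewise formula for $\Psi^{wds}_{\mu_t}$, the one-line refutation of stochastic decrease via $\mu_t([0,+\infty[)=t/2$, and the treatment of the region $0\le x<1/t$ are all correct; on that region your quotient-rule argument with $N'(t)>0$, $N\ge0$, $D>0$, $D'\le0$ is a valid variant of the paper's factorization of $\Psi^{wds}_{\mu_t}(x)$ into two nonnegative factors that are non-decreasing in $t$. However, the proof is not complete: the sub-case $x\in\left]-\alpha(t),0\right[$, which you yourself call ``the hard part'' and dispose of with ``requires a careful algebraic manipulation exploiting the explicit form of $\alpha$ and the bound $\alpha(t)\ge-x$'', is precisely the core of the proposition --- it is the only place where the specific choice of $\alpha$ enters, and announcing the right ingredients is not a proof. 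As it stands, your expression $\Psi^{wds}_{\mu_t}(x)=\frac{(2-t)(\alpha(t)^2-x^2)}{2(t\alpha(t)-(2-t)x)}$ does not split into separately monotone pieces: for $x<0$ the denominator is the sum of the increasing term $t\alpha(t)$ and the decreasing positive term $-(2-t)x$, so no conclusion follows without further work.

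The missing step is closed exactly as in the paper, by normalizing the ratio so that numerator and denominator are separately monotone with the right signs: dividing both by $t\alpha(t)$ gives
\[
\Psi^{wds}_{\mu_t}(x)=\frac{\dfrac{2-t}{2t\alpha(t)}\bigl(\alpha(t)^2-x^2\bigr)}{1-\dfrac{2-t}{t\alpha(t)}\,x},
\qquad
\frac{(2-t)\alpha(t)}{2t}=\frac{k+1}{k+2}(2t)^{k+1}(2t+1),
\qquad
\frac{2-t}{2t\alpha(t)}=\frac{(k+2)(2-t)^2}{(k+1)(2t)^{k+3}(2t+1)}.
\]
For fixed $x<0$ the denominator $1+|x|\frac{2-t}{t\alpha(t)}$ is positive and non-increasing in $t$ (the factor $\frac{2-t}{t\alpha(t)}$ decreases on $\left]1/2,2\right[$), while the numerator $\frac{(2-t)\alpha(t)}{2t}-\frac{2-t}{2t\alpha(t)}x^2$ is non-decreasing in $t$ (its first term increases and the positive coefficient of $-x^2$ decreases) and is nonnegative precisely because $x^2\le\alpha(t)^2$ on the range where the middle formula applies; a nonnegative non-decreasing numerator over a positive non-increasing denominator is non-decreasing in $t$, which yields $\Psi^{wds}_{\mu_s}(x)\le\Psi^{wds}_{\mu_t}(x)$ there. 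This is the paper's argument for that region (written there with $\alpha$ already substituted). Until such a computation is supplied, your proposal establishes the WDS ordering only on $\{x\ge0\}\cup\{x\le-\alpha(t)\}$.
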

\begin{proof}
Observe that $\alpha$ is positive and non-decreasing on $]1/2,2[$ and that 
\[
\forall\,t\in\left]1/2,2\right[,\text{ }m_t:=\int_{\R}y\mu_t(dy)=\frac{k+1}{2(k+2)}\left(1-\frac{1}{2}(2t)^{k+2}(2t+1)\right)<0.
\]
Fix $1/2<s\leq t<2$ and $x\in\R$. If $x\geq1/t$, then $\Psi^{wds}_t(x)=+\infty\geq\Psi^{wds}_s(x)$, and if $x\leq-\alpha(s)$, then $\Psi^{wds}_s(x)=0\leq\Psi^{wds}_t(x)$. Suppose that $x\in\left[0,1/t\right[$. Then for every $v\in[s,t]$,
\begin{equation}\label{eq:exastarMRLSI}
\int_x^{1/v}\mu_v(dy)=\frac{(k+1)v^{k+2}}{2}\int_x^{1/v}y^kdy=\frac{v}{2}\left(1-(vx)^{k+1}\right)
\end{equation}
and
\begin{equation}\label{eq:exastarMRLMean}
\int_x^{1/v}y\mu_v(dy)=\frac{(k+1)v^{k+2}}{2}\int_x^{1/v}y^{k+1}dy=\frac{k+1}{2(k+2)}\left(1-(vx)^{k+2}\right).
\end{equation}
Note that the equality (\ref{eq:exastarMRLSI}) shows that  $\left(\mu_v,v\in\left]1/2,2\right[\right)$ does not decrease in the stochastic order. We deduce from (\ref{eq:exastarMRLSI}) and (\ref{eq:exastarMRLMean}) that
\begin{align*}
\Psi^{wds}_v(x)&=\frac{k+1}{k+2}\times\frac{1-(vx)^{k+2}-1+(2v)^{k+1}v(2v+1)}{v(1-(vx)^{k+1})}\\
&=\frac{k+1}{k+2}\times\frac{v^{k+1}}{1-(vx)^{k+1}}(2^{k+1}(2v+1)-x^{k+2}).
\end{align*}
On the other hand, 
$$
v\longmapsto \frac{v^{k+1}}{1-(vx)^{k+1}}
$$
is a nonnegative non-decreasing function on $\left]1/2,2\right[$ and, since $x\leq1/v$ and $v\geq1/2$, 
$$
 v\longmapsto 2^{k+1}(2v+1)-x^{k+2} 
$$ 
is also a nonnegative non-decreasing function on $\left]1/2,2\right[$. Therefore, $v\longmapsto\Psi^{wds}_v(x)$ is non-decreasing on $\left]1/2,2\right[$ and, as a consequence, $\Psi_s^{wds}(x)\leq\Psi_t^{wds}(x)$.\\
Suppose now that $x\in]-\alpha(s),0[$. For every $v\in[s,t]$, $-\alpha(v)\leq-\alpha(s)$ since $\alpha$ is non-decreasing, and we have:
\begin{equation*}
\int_x^{1/v}\mu_v(dy)=\int_x^0\mu_v(dy)+\int_0^{1/v}\mu_v(dy)=-\frac{(k+2)(2-v)^2}{2(k+1)(2v)^{k+2}(2v+1)}x+\frac{v}{2}.
\end{equation*}
and 
\begin{equation*}
\int_x^{1/v}y\mu_v(dy)=\int_x^{0}y\mu_v(dy)+\int_0^{1/v}y\mu_v(dy)=-\frac{(k+2)(2-v)^2}{4(k+1)(2v)^{k+2}(2v+1)}x^2+\frac{k+1}{2(k+2)}.
\end{equation*}
We then obtain
\begin{equation*}
\Psi_v^{wds}(x)=\frac{-\dfrac{(2-v)^2}{(2v)^{k+3}(2v+1)}x^2+ \dfrac{(k+1)^2}{(k+2)^2}(2v)^{k+1}(2v+1)}{-\dfrac{2(2-v)^2}{(2v)^{k+3}(2v+1)}x+1 }.
\end{equation*}
Remark that, for every $x\in]-\alpha(s),0[$,
\begin{equation*}
v\longmapsto -\dfrac{2(2-v)^2}{(2v)^{k+3}(2v+1)}x+1  
\end{equation*}
is non-increasing and nonnegative on $]1/2,2[$ and that, since $-x^2>-\alpha^2(s)\geq-\alpha^2(v)$, 
\begin{equation*}
v\longmapsto -\dfrac{(2-v)^2}{(2v)^{k+3}(2v+1)}x^2+ \dfrac{(k+1)^2}{(k+2)^2}(2v)^{k+1}(2v+1) 
\end{equation*}
is non-decreasing and nonnegative on $]1/2,2[$. Consequently, $v\longmapsto\Psi_v^{wds}(x)$ is non-decreasing on $]1/2,2[$ and, hence, $\Psi_s^{wds}(x)\leq\Psi_t^{wds}(x)$. 
\end{proof}

\section*{Acknowledgement}
We thank the anonymous referees for valuable suggestions and comments which have improved the manuscript. We also thank the African Center of Excellence in Technologies, Information and Communication (CETIC) to placed in our disposal its infrastructures. This help use to improve conditions of this work.

\end{document}